\def\cite{\citet}
\numberwithin{equation}{section}
\def\@noindentfalse{\global\let\if@noindent\iffalse}
\def\@noindenttrue {\global\let\if@noindent\iftrue}
\def\@aftertheorem{%
  \@noindenttrue
  \everypar{%
    \if@noindent%
      \@noindentfalse\clubpenalty\@M\setbox\z@\lastbox%
    \else%
      \clubpenalty \@clubpenalty\everypar{}%
    \fi}}
\theoremstyle{plain}
\newtheorem{theorem}{Theorem}[section]
\newtheorem{lemma}[theorem]{Lemma}
\newtheorem{proposition}[theorem]{Proposition}
\theoremstyle{definition}
\newtheorem{remark}[theorem]{Remark}
\bf\mathversion{bold}}{\thesubsection\kern1em}{0pt}{}
\bf\mathversion{bold}}{}{0pt}{}
\def\note#1{\par\smallskip%
\noindent\kern-0.01\hsize%
\setlength\fboxrule{0pt}\fbox{\setlength\fboxrule{0.5pt}\fbox{%
\llap{$\boldsymbol\Longrightarrow$ }%
\vtop{\hsize=0.98\hsize\parindent=0cm\small\rm #1}%
\rlap{$\enskip\,\boldsymbol\Longleftarrow$}
}%
}%
}
\def\given{\mskip 0.5mu plus 0.25mu\vert\mskip 0.5mu plus 0.15mu}
\newcounter{bracketlevel}% 
\def\@bracketfactory#1#2#3#4#5#6{%
\expandafter\def\csname#1\endcsname##1{%
\global\advance\c@bracketlevel 1\relax%
\global\expandafter\let\csname @middummy\alph{bracketlevel}\endcsname\given%
\global\def\given{\mskip#5\csname#4\endcsname\vert\mskip#6}\csname#4l\endcsname#2##1\csname#4r\endcsname#3%
\global\expandafter\let\expandafter\given\csname @middummy\alph{bracketlevel}\endcsname%
\global\advance\c@bracketlevel -1\relax%
}%
}
\def\bracketfactory#1#2#3{%
\@bracketfactory{#1}{#2}{#3}{relax}{0.5mu plus 0.25mu}{0.5mu plus 0.15mu}
\@bracketfactory{b#1}{#2}{#3}{big}{1mu plus 0.25mu minus 0.25mu}{0.6mu plus 0.15mu minus 0.15mu}
\@bracketfactory{bb#1}{#2}{#3}{Big}{2.4mu plus 0.8mu minus 0.8mu}{1.8mu plus 0.6mu minus 0.6mu}
\@bracketfactory{bbb#1}{#2}{#3}{bigg}{3.2mu plus 1mu minus 1mu}{2.4mu plus 0.75mu minus 0.75mu}
\@bracketfactory{bbbb#1}{#2}{#3}{Bigg}{4mu plus 1mu minus 1mu}{3mu plus 0.75mu minus 0.75mu}
}
\let\original@left\left
\let\original@right\right
\renewcommand{\left}{\mathopen{}\mathclose\bgroup\original@left}
\renewcommand{\right}{\aftergroup\egroup\original@right}
\newcounter{ctr}\loop\stepcounter{ctr}\edef\X{\@Alph\c@ctr}%
\edef\csname s\X\endcsname{\noexpand\mathscr{\X}}
\edef\csname c\X\endcsname{\noexpand\mathcal{\X}}
\edef\csname b\X\endcsname{\noexpand\boldsymbol{\X}}
\edef\csname I\X\endcsname{\noexpand\mathbb{\X}}
\let\@IE\IE\let\IE\undefined
\newcommand{\IE}{\mathop{{}\@IE}\mathopen{}}
\newcommand{\E}{\mathop{{}\@IE}\mathopen{}}
\let\@IP\IP\let\IP\undefined
\newcommand{\IP}{\mathop{{}\@IP}\mathopen{}}
\renewcommand{\P}{\mathop{{}\@IP}\mathopen{}}
\newcommand{\Var}{\mathop{\mathrm{Var}}}
\newcommand{\Cov}{\mathop{\mathrm{Cov}}}
\def\R{\IR}
\def\^#1{\relax\ifmmode {\mathaccent"705E #1} \else {\accent94 #1} \fi}
\def\~#1{\relax\ifmmode {\mathaccent"707E #1} \else {\accent"7E #1} \fi}
\def\*#1{\relax#1^\ast}
\edef\-#1{\relax\noexpand\ifmmode {\noexpand\bar{#1}} \noexpand\else \-#1\noexpand\fi}
\def\>#1{\vec{#1}}
\def\.#1{\dot{#1}}
\def\atop{\@@atop}
\renewcommand{\leq}{\leqslant}
\renewcommand{\geq}{\geqslant}
\renewcommand{\phi}{\varphi}
\newcommand{\I}{\mathop{{}\mathrm{I}}\mathopen{}}
\newcommand\indep{\protect\mathpalette{\protect\@indep}{\perp}}
\def\@indep#1#2{\mathrel{\rlap{$#1#2$}\mkern2mu{#1#2}}}
\def\parsetime#1#2#3#4#5#6{#1#2:#3#4}
\def\parsedate#1:20#2#3#4#5#6#7#8+#9\empty{20#2#3-#4#5-#6#7 \parsetime #8}
\def\moddate{\expandafter\parsedate\pdffilemoddate{\jobname.tex}\empty}
\crefname{equation}{}{}
\crefname{page}{p.}{pp.}
\newenvironment{equ}
{\begin{equation} \begin{aligned}}
{\end{aligned} \end{equation}}
\newlist{condition}{enumerate}{10}
\setlist[condition]{label*=({A}\arabic*)}
\crefname{conditioni}{condition}{conditions}
\Crefname{conditioni}{Condition}{Conditions}
\begin{document}

\title{\sc\bf\large\MakeUppercase{Berry--Esseen bounds for self-normalized sums of local dependent random variables}}
\author{\sc Zhuo-Song Zhang}

\date{\itshape National University of Singapore}

\maketitle

%%
%% If there are three of more authors they are added in the obvious
%% way. 
%%

%%%
%%% The following is for the abstract.  The abstract is optional and
%%% if not used just delete, or comment out, the following.
%%%
\maketitle
\begin{abstract}
	In this paper, we prove a Berry--Esseen bound with optimal order for self-normalized sums of local dependent random variables under some mild dependence conditions. The proof is based on Stein's method and a randomized concentration inequality. 
	As applications, we obtain optimal Berry--Esseen bounds for $m$-dependent random variables and graph dependency. 
\end{abstract}

%%
%%  LaTeX will not make the title for the paper unless told to do so.
%%  This is done by uncommenting the following.
%%

%%
%% LaTeX can automatically make a table of contents.  This is done by
%% uncommenting the following:
%%

%%
%%  To enter text is easy.  Just type it.  A blank line starts a new
%%  paragraph. 
\section{Introduction}
\label{sec:1}
Let $X_1, X_2, \dots $ be a sequence of independent random variables, and let 
\begin{align*}
	S_n = \sum_{i = 1}^n X_i, \quad V_n^2 = \sum_{i = 1}^n X_i^2, \quad \text{ and } \quad  \hat\sigma_n^2 = \frac{n - 1}{n} \sum_{i = 1}^n (X_i - \bar X)^2, 
\end{align*}
where $\bar X = S_n/n$. We say $S_n/V_n$ is a \emph{self-normalized sum}. We note that $S_n/\hat\sigma_n$ is the well-known Student's $t$-statistic, which is one of the most important tools in statistical testing
when the standard deviation of the underlying distribution is unknown. 
Based on the fact that 
\begin{align*}
	\P(S_n/\hat\sigma_n \geq x) 
	= \P(S_n/V_n \geq x[ n / (n + x^2 - 1) ]^{1/2}), \quad x \geq 0,
\end{align*}
it is common to study the self-normalized sum $S_n/V_n$. 
One of the most important advantages of self-normalized sums is that the range of Gaussian approximation can be much wider than their corresponding non-self-normalized sums under the same polynomial moment conditions. 
Berry--Esseen bounds of Gaussian approximation for self-normalized sums of independent random variables have been well-studied in the literature.  For example,  
Berry--Esseen theorem for Student's $t$-statistics were proved in \cite{Ben96} and \cite{Ben96a}, and  an exponential nonuniform Berry--Esseen bound was obtained in \cite{Jin99}. Differing from the method in \cite{Ben96} and \cite{Ben96a}, Stein's method can be used to prove
Berry--Esseen bounds with
explicit constants (c.f. \cite{shao_explicit_2005}).

If $(X_1, X_2, \dots)$ is a family of dependent random variables, asymptotic theories on self-normalized sums  have also been studied in the literature. 
% The normalizing term $V_n$ may be different under different dependence assumptions. 
For example, in \cite{de04}, the authors studied self-normalized processes,  and in \cite{Ber08a}, 
self-normalized sums for martingales were considered.
Recently,  a Cram\'er-type moderate deviation under some weak dependence assumptions was proved in \cite{Che16}. For more details, we refer to \cite{Sha13a} for a
survey.  

Local dependence is also a commonly used dependence structure in applications. A family of local dependent random variables means that certain subset of the random variables are independent of those outside their respective neighborhoods. 
There are several forms of local dependence assumptions in the literature. For example,  \emph{decomposable random variables} were considered in \cite{Bar89}, where a $L_1$ bound was also obtained; \emph{dependency neighborhoods} was introduced in \cite{Ri96M} to study error bounds for multivariate normal
approximation. Uniform  and nonuniform Berry--Esseen bounds for a general local dependence structure were established in \cite{Che04b}. Recently, Fang \cite{Fan19b} proved an error bound of Wasserstein-2 distance for a generalized local dependence structure.

In this paper, our main purpose is to prove Berry--Esseen bounds for self-normalized sums of local dependent random variables. 
In Theorem 2.1 (see Section 2), under conditions (LD1) and (LD2), we provide a Berry--Esseen bound for self-normalized sums of local dependent random variables. 
Compared to the results for non-self-normalized sums in Theorem 2.2 in \cite{Che04b}, under the same conditions (LD1) and (LD2), we do not require the fourth moment assumption to obtain the optimal convergence rate, which in turn shows robustness of self-normalized statistics. 
As applications, we obtain Berry--Esseen bounds for studentized statistics for $m$-dependent random variables and graph dependency. 

The proof of our main result is based on Stein's method and concentration inequality approach. 
The technique of concentration inequality approach has been applied to obtain sharp Berry--Esseen bounds for univariate and multivariate normal approximations in the literature, and we refer to
\cite{chen_steins_1998,chen_non-uniform_2001,Che04b,chen_normal_2007,shao_explicit_2005,shao_cramer_2016} and \cite{shao_berry--esseen_2021}.  In this paper, we develop new randomized concentration inequalities for local dependent random variables under some mild dependence conditions. The ideas are based on \cite{Che04b} and \cite{shao_explicit_2005}. 

The rest of this paper is organized as follows. We give our main results in Section 2. Applications are given in Section 3. Some useful preliminary lemmas and a new randomized concentration inequality are proved in Section 4. We give the proof of our main result in Section
5.

\section{Main results}

Let $\{X_i\}_{i \in \mathcal{J}}$ be a field of real-valued random variables satisfying that $\E  X_i  = 0$ for all $i \in \mathcal{J}$. 
We introduce the following  local dependence conditions. 
\begin{enumerate}[({LD}1)]
	\item For any $i \in \mathcal{J}$, there exists $A_i \subset \mathcal{J}$ such that $X_i$ is independent of $\{X_j : j \not\in A_i\}$; 
	\item For any $i \in \mathcal{J}$, there exists $B_i \subset \mathcal{J}$ such that $B_i \supset A_i$ and $\{ X_j : j \in A_i \}$ is independent of $\{X_j : j \not\in B_i\}$.
\end{enumerate}
These dependence assumptions have been commonly used in the literature. We remark that we do not assume any structures on the index set. 
Put 
\begin{equ}
	S & =  \sum_{i \in \mathcal{J}} X_i , &
	V & =  \sqrt{ \Bigl( \sum_{i \in \mathcal{J}} (X_i Y_i - \bar X \bar Y) \Bigr)_{+} },  &
	W & =  S / V, 
    \label{eq-SVW}
\end{equ}
where $Y_i = \sum_{j \in A_i} X_i$, $\bar X  = \sum_{i \in \mathcal{J}} X_i / \lvert \mathcal{J} \rvert$, $\bar Y = \sum_{j \in \mathcal{J}} Y_j / \lvert \mathcal{J} \rvert$ and $(x)_{+} = \max(x, 0)$ is the positive part of $x$. 
Let $\sigma = (\Var(S))^{1/2}$. 
Here and in the sequel, we denote by $|A|$ the cardinality
of $A$ for any $A \subset \mathcal{J}$. 
We remark that if $\E  X_i $ is not necessarily 0, then one can simply replace $X_i$ by $X_i - \E  X_i $ in \cref{eq-SVW}. 

Moreover, let $\kappa$ be any number such that $\kappa\geq \max_{i \in \mathcal{J}} \{\lvert \{ j: B_i \cap A_j \neq \emptyset \}\rvert , \lvert\{ j: i \in B_j \} \rvert$ and let 
\begin{align*}
	\beta_0 & =  \sum_{i \in \mathcal{J}} \P( \lvert X_i \rvert > \sigma / \kappa ), \\
	\beta_2 & =  \frac{1}{\sigma^2} \sum_{i \in \mathcal{J}} \E \{ \lvert X_i\rvert^2 \I ( \lvert X_i \rvert > \sigma/\kappa ) \}, \\
	\beta_3 & =  \frac{1}{\sigma^3} \sum_{i \in \mathcal{J}} \E \{ \lvert X_i \rvert^3 \I ( \lvert X_i \rvert \leq \sigma/\kappa ) \}, \\
	\theta  & = \frac{1}{\sigma^2} \sum_{i \in \mathcal{J}} \sum_{j \in A_i} \E \lvert X_i X_j  \I( |X_i|  \leq \sigma / \kappa, \lvert X_j \rvert \leq \sigma / \kappa ) \rvert.
\end{align*}

We have the following theorem. 

\begin{theorem}
\label{thm1}
	Under (LD1) and (LD2). 
We have
	\begin{equ}
		\MoveEqLeft \sup_{z \in \R} \bigl\lvert \P(W \leq z) - \Phi(z) \bigr\rvert\\
		& \leq C \{  (1 + \theta) \kappa^2 \beta_3+ \kappa \beta_2 + \beta_0\} + C \kappa^{1/2} (\theta + 1) |\mathcal{J}|^{-1/2} , 
        \label{eq-thm1}
	\end{equ}
	where $C > 0$ is an absolute constant and $\Phi$ is the standard normal distribution function.
\end{theorem}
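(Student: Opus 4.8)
The plan is to combine Stein's method for normal approximation with a randomized concentration inequality tailored to the local dependence structure, following the scheme of \cite{Che04b} and \cite{shao_explicit_2005} but adapted to the self-normalized statistic $W = S/V$. The first step is the standard truncation: set $\bar X_i = X_i \I(|X_i| \le \sigma/\kappa)$ and work with the truncated sum. The terms $\beta_0$ and $\beta_2$ account for the cost of replacing $X_i$ by $\bar X_i$ both in $S$ and in $V^2$; one shows that on the event where no truncation occurs, $W$ differs from its truncated analogue by a controllable amount, and the probability of the bad event is bounded by $\beta_0$. After truncation we may assume $|X_i| \le \sigma/\kappa$ throughout, which is what makes the third-moment quantities $\beta_3$ and $\theta$ the natural scale for the error.

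The core of the argument is to write the event $\{W \le z\} = \{S \le z V\}$ and, since $V^2$ is (up to the centering correction $\bar X \bar Y$) close to $\sum_i X_i Y_i$, to linearize: on $\{W \le z\}$ one has roughly $S \le z(1 + \frac12(V^2 - 1) + \dots)$, so that one is led to estimate $\P(S \le z + \text{a quadratic-in-}X \text{ correction})$. Following \cite{shao_explicit_2005}, I would introduce the Stein equation $f'(w) - w f(w) = \I(w \le z) - \Phi(z)$ with the bounded solution $f = f_z$, and then estimate $\E\{f'(W) - W f(W)\}$. The decomposition $\E W f(W) = \sum_{i} \E X_i f(W) / V$ is handled by Taylor-expanding $f(W)$ around $f(W - W_i)$ where $W_i$ removes the coordinates in $A_i$ (resp. $B_i$), using (LD1) to detach $X_i$ and (LD2) to detach $\{X_j : j \in A_i\}$; the second-order term produces $Y_i = \sum_{j \in A_i} X_i$ and matches $f'(W)$ via the normalization $V^2 \approx \sum_i X_i Y_i$. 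The constant $\kappa$ enters because each coordinate can interact with at most $\kappa$ of the neighbourhood sums, so the error terms are sums over $i$ of third-order quantities, giving $\kappa^2 \beta_3$ and $(\theta+1)\kappa^2\beta_3$ after accounting for the variance-of-$V^2$ fluctuations; the centering corrections $\bar X, \bar Y$ contribute the $\kappa^{1/2}(\theta+1)|\mathcal{J}|^{-1/2}$ term.

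The main obstacle — and the reason a \emph{randomized} concentration inequality is needed — is controlling terms of the form $\E\{ X_i (f(W) - f(W - W_i)) \}$ where the increment of $f$ is not smooth: $f_z$ has a jump in its derivative at $z$, so a crude bound loses the optimal rate. The standard fix is to bound quantities like $\E\{ |X_i| \cdot \I(z - \delta_i \le W - W_i \le z + \delta_i)\}$ for random half-widths $\delta_i$ depending on the removed coordinates; this requires showing $\P(z - a \le \bar W \le z + b \mid \mathcal{F}_i) \le C(b - a + \text{error})$ uniformly, where $\bar W$ is an appropriate conditional version of $W$ and $\mathcal{F}_i$ is the $\sigma$-field generated by the coordinates near $i$. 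Establishing such an inequality for the self-normalized $W$, whose denominator $V$ is itself a random function of all the $X_j$, is the technical heart of the paper; I would prove it by a Stein-type argument applied to the indicator of an interval (using a smoothed concentration kernel), exploiting (LD1)--(LD2) to decouple, and absorbing the denominator by conditioning on $V$ being bounded away from zero (a bad event again controlled by $\beta_0, \beta_2$). This is presumably the "new randomized concentration inequality" announced in Section 4; once it is in hand, the remaining estimates are routine bookkeeping of the error sums, collecting them into the four advertised groups $(1+\theta)\kappa^2\beta_3$, $\kappa\beta_2$, $\beta_0$, and $\kappa^{1/2}(\theta+1)|\mathcal{J}|^{-1/2}$.
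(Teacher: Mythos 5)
Your proposal follows essentially the same route as the paper: truncation at level $\sigma/\kappa$ (with $\beta_0,\beta_2$ paying for it), Stein's method applied directly to the truncated self-normalized statistic with the local-dependence decomposition of $\E\{\bar W f(\bar W)\}$ via $\eta_i=\bar Y_i/\bar V$, a randomized concentration inequality for $\bar W$ conditional on $\mathcal{F}_i=\sigma(X_j:j\in A_i)$ proved by a smoothed-indicator Stein argument with $\bar V$ clamped away from zero, and a separate fourth-moment/smoothing estimate for the centering term $\bar X\bar Y$ producing the $\kappa^{1/2}(\theta+1)|\mathcal{J}|^{-1/2}$ contribution. This matches the paper's Sections 4--5 in structure and in the role assigned to each error term, so the plan is sound as an outline of the actual proof.
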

\begin{remark}
	We first make some remarks on $\beta_j$'s. If $\max_{i}\lvert X_i \rvert\leq \sigma / \tau$ almost surely, then $\beta_0 = 0$. Otherwise, by Chebyshev's inequality, we have 
	\begin{align*}
		\beta_0 \leq \kappa^2 \beta_2. 
	\end{align*}
	If $\E \lvert X_i \rvert^3 < \infty$, then 
	\begin{align*}
		\kappa\beta_2 + \kappa^2\beta_3 \leq \frac{\kappa^2}{\sigma^3} \sum_{i \in \mathcal{J}} \E \lvert X_i \rvert^3. 
	\end{align*}
	Moreover, by H\"older's inequality, we have 
	\begin{equ}
		\theta 
		& \leq \biggl( \E \biggl\{ \frac{1}{\sigma^2} \sum_{i \in \mathcal{J}} \sum_{j \in A_i} \lvert X_i X_j \rvert \I( \lvert X_i \rvert \leq \sigma / \kappa , \lvert X_j \rvert \leq \sigma / \kappa) \biggr\}^{3/2} \biggr)^{2/3}\\
		& \leq \kappa_1 \lvert \mathcal{J} \rvert^{1/3} \beta_3^{2/3} \leq \kappa \lvert \mathcal{J} \rvert^{1/3} \beta_3^{2/3}, 
        \label{eq-theta-bound}
	\end{equ}
	where $\kappa_1 = \max_{i \in \mathcal{J}}  \lvert A_i \rvert $. If $\beta_3$ is of order $O(\lvert \mathcal{J}\rvert^{-1/2})$, then $\theta$ is of order $O(1)$.
\end{remark}

\begin{remark}
	Chen and Shao \cite{Che04b} proved a Berry--Esseen bound of the same order as in \cref{thm1} for non-self-normalized sums. Specifically, assume further that  for any $i \in \mathcal{J}$, there exists $C_i \subset \mathcal{J}$ such that $C_i \supset B_i$ and  $\{ X_j : j \in B_i \}$ is independent of $\{X_j : j \not\in C_i\}$,  and 
it follows that (see Theorem 2.4 of \cite{Che04b})
\begin{align*}
	\sup_{z \in \R}\lvert \P(S/\sigma \leq z) - \Phi(z) \rvert \leq 75 (\kappa')^{p - 1} \sum_{i \in \mathcal{J}} \E \lvert X_i/\sigma \rvert^{p},
\end{align*}
where $\kappa' \geq \max ( \lvert \{j: B_j \cap C_i \neq \emptyset\} \rvert, \lvert \{j : i \in C_j\} \rvert )$.	Under the same dependence condition as in Theorem 1.1,  Chen and Shao \cite{Che04b} proved that for $2  < p \leq 4$, 
	\begin{align*}
		\sup_{z \in \R} \bigl\lvert \P( S/ \sigma \leq z) - \Phi(z) \bigr\rvert & \leq C (1 + \kappa) \sum_{i \in \mathcal{J}} \bigl( \E |X_i / \sigma|^{3 \wedge p} + \E |Y_i/\sigma|^{3 \wedge p} \bigr) \\
		& \quad + C \kappa^{1/2}\biggl(\sum_{i \in \mathcal{J}} \biggl( \E \lvert X_i/\sigma \rvert^p + \E \lvert Y_i/\sigma \rvert^p \biggr) \biggr)^{1/2}, 
	\end{align*}
	which is of the best possible order $O(n^{-1/2})$ when $p = 4$ and $\kappa = O(1)$. For the self-normalized sum $S / V$, the best possible order can be obtained under a third moment condition. 
\end{remark}
\begin{remark}
	Specially, if $\{ X_i, i \in \mathcal{J} \}$ is a field of independent random variables, then it is not hard to see that $\kappa = \theta = 1$. Then, by \cref{eq-theta-bound}, $\lvert J \rvert^{-1/2} \leq \beta_3$. Hence, the right hand side of \cref{eq-thm1} reduces to $C (\beta_0 + \beta_2+ \beta_3 )$, which is as same as the
	results in 
	\cite{Ben96a}.  
\end{remark}

\section{Applications}%
\label{sec:applications}

\subsection{Self-normalized sums of \emph{m}-dependent random variables}%
\label{sub:app1}

Let $d \geq 1$ and let $\mathbb{Z}^d$ denote the $d$-dimensional space of positive integers. For any $i = (i_1, \dots, i_d), j = (j_1, \dots, j_d) \in \mathbb{Z}^d$, we define the distance by $\lvert i - j \rvert \coloneqq \max_{1  \leq k \leq d} \lvert i_k -
j_k \rvert$, and for $A, B \subset \mathbb{Z}^d$, we define the distance between $A$ and $B$ by $\rho(A, B) = \inf \{ \lvert i - j  \rvert : i \in A, j \in B \}$.
Let $\mathcal{J}$ be a subset of $\mathbb{Z}^d$, and we say a field of random variables $\{ X_i : i \in \mathcal{J} \}$ is an $m$-dependent random field if $\{ X_i , i \in A \}$ and $\{ X_j , j \in B \}$ are independent whenever $\rho(A, B) > m$ for any $A, B
\subset \mathcal{J}$. 
Choose $A_i = \{ j : \lvert i - j \rvert \leq m \}$, $B_i = \{ j : \lvert i - j \rvert \leq 2m \}$. 
We have $\lvert \{ j : A_j \cap B_i \} \rvert \leq (6m+1)^d$ for all $i$. 
Applying \cref{thm1}, 
we have the following theorem. 

\begin{theorem}
	\label{thm-mdep}
	Let $\{ X_i , i \in \mathcal{J} \}$ be an $m$-dependent field with $\E  X_i  = 0$ and assume that $\E |X_i|^3 < \infty$. Let $Y_i = \sum_{j \in A_i} X_i$. Assume that $\sigma^2 \coloneqq \sum_{i \in \mathcal{J}} \E \{ X_i Y_i \} > 0$. Let $W$ be as in \cref{eq-SVW}. Then, 
	\begin{align*}
		\sup_{z \in \R} \bigl\lvert \P( W \leq z ) - \Phi(z) \bigr\rvert
		& \leq C (m+1)^{3d} (1 + (m+1)^{d} \lvert \mathcal{J} \rvert^{1/3} \gamma^{2/3})( \gamma + |\mathcal{J}|^{-1/2}), 
	\end{align*}
	where $\gamma = \sum_{i \in \mathcal{J}} \E |X_i|^3 / \sigma^3.$
\end{theorem}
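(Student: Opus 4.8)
The plan is to derive Theorem~\ref{thm-mdep} as a direct corollary of Theorem~\ref{thm1} by bounding each of the quantities $\beta_0$, $\beta_2$, $\beta_3$ and $\theta$ in terms of the single scalar $\gamma = \sigma^{-3}\sum_{i\in\mathcal J}\E|X_i|^3$ and the combinatorial parameter $\kappa$. First I would fix the dependency neighborhoods $A_i=\{j:|i-j|\le m\}$ and $B_i=\{j:|i-j|\le 2m\}$ as in the statement, and verify that (LD1) and (LD2) hold for the $m$-dependent field with these choices: if $j\notin A_i$ then $\rho(\{i\},\{j\})>m$ so $X_i\indep X_j$; and $A_i$ has diameter $\le 2m$ while any $j\notin B_i$ has $\rho(A_i,\{j\})>m$, giving (LD2). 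Then I would pin down an admissible $\kappa$: since $B_i\cap A_j\ne\emptyset$ forces $|i-j|\le 3m$, one has $|\{j:B_i\cap A_j\ne\emptyset\}|\le(6m+1)^d$, and likewise $|\{j:i\in B_j\}|\le(4m+1)^d\le(6m+1)^d$, so $\kappa=(6m+1)^d\le C(m+1)^d$ is a valid choice.

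Next I would control the moment quantities. By Chebyshev/Markov and the third-moment bound, $\beta_0=\sum_i\P(|X_i|>\sigma/\kappa)\le(\kappa/\sigma)^3\sum_i\E|X_i|^3=\kappa^3\gamma$, and similarly $\beta_2\le(\kappa/\sigma)\sigma^{-2}\sum_i\E|X_i|^3=\kappa\gamma$ (truncating $|X_i|^2\I(|X_i|>\sigma/\kappa)\le(\kappa/\sigma)|X_i|^3$), while trivially $\beta_3\le\sigma^{-3}\sum_i\E|X_i|^3=\gamma$. For $\theta$ I would invoke the bound already recorded in the first remark, namely $\theta\le\kappa|\mathcal J|^{1/3}\beta_3^{2/3}\le\kappa|\mathcal J|^{1/3}\gamma^{2/3}$, together with the crude bound $\theta\le\text{(something)}\cdot$; in fact only the displayed inequality \eqref{eq-theta-bound} is needed, giving $1+\theta\le 1+\kappa|\mathcal J|^{1/3}\gamma^{2/3}$.

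Then I would substitute these into \eqref{eq-thm1}. The first group becomes $(1+\theta)\kappa^2\beta_3+\kappa\beta_2+\beta_0\le(1+\theta)\kappa^2\gamma+\kappa^2\gamma+\kappa^3\gamma\le C(1+\theta)\kappa^3\gamma$, absorbing $\kappa^2\gamma$ and $\kappa^3\gamma$ into $(1+\theta)\kappa^3\gamma$ (using $\kappa\ge1$). The second group is $\kappa^{1/2}(\theta+1)|\mathcal J|^{-1/2}\le(1+\theta)\kappa^3|\mathcal J|^{-1/2}$. Adding and factoring out $(1+\theta)\kappa^3$ gives the bound $C(1+\theta)\kappa^3(\gamma+|\mathcal J|^{-1/2})$. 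Finally I would plug in $\kappa\le C(m+1)^d$ so $\kappa^3\le C(m+1)^{3d}$ and $1+\theta\le1+\kappa|\mathcal J|^{1/3}\gamma^{2/3}\le C(1+(m+1)^d|\mathcal J|^{1/3}\gamma^{2/3})$, which yields exactly the asserted inequality
\[
\sup_{z\in\R}\bigl|\P(W\le z)-\Phi(z)\bigr|\le C(m+1)^{3d}\bigl(1+(m+1)^d|\mathcal J|^{1/3}\gamma^{2/3}\bigr)\bigl(\gamma+|\mathcal J|^{-1/2}\bigr).
\]
There is no serious obstacle here; the only point requiring a little care is the combinatorial counting that certifies the choice of $\kappa$ (in particular the factor $6m+1$ coming from the triangle inequality $|i-j|\le|i-k|+|k-j|\le 3m$ when $k\in B_i\cap A_j$), and the bookkeeping of which powers of $\kappa$ and $m$ accumulate when the four error terms are merged.
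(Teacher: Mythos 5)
Your proposal is correct and follows exactly the route the paper intends: the paper's own "proof" of \cref{thm-mdep} consists only of specifying $A_i=\{j:|i-j|\le m\}$, $B_i=\{j:|i-j|\le 2m\}$, noting the count $(6m+1)^d$ for $\kappa$, and invoking \cref{thm1}, and your derivation fills in precisely those steps (the Markov-type bounds $\beta_0\le\kappa^3\gamma$, $\beta_2\le\kappa\gamma$, $\beta_3\le\gamma$, the bound \cref{eq-theta-bound} for $\theta$, and the bookkeeping of powers of $\kappa$). The only cosmetic blemish is the dangling phrase ``together with the crude bound $\theta\le$ (something)'', which you correctly note is not needed.
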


\subsection{Graph dependency}%
\label{sub:app2}

We now consider a field of random variables $\{ X_i : i \in \mathcal{V} \}$ indexed by the vertices of a graph $\mathcal{G}= (\mathcal{V}, \mathcal{E})$. We say $\mathcal{G}$ is a dependency graph if for any pair of disjoint sets $\Gamma_1 $
and $\Gamma_2$ in $\mathcal{V}$ such that no edge has one endpoint in $\Gamma_1$ and the other in $\Gamma_2$, then the sets of random variables $\{ X_i : i \in \Gamma_1 \}$ and $\{ X_j : j \in \Gamma_2 \}$ are independent. Let $A_i = \{ j \in \mathcal{V} : \text{there is an edge connecting $i$ and $j$}\}$, $A_{ij} = A_i \cup A_j$, $B_i = \cup_{j \in A_i} A_j$. Noting that $\lvert \{ j : A_j \cap B_i \} \rvert \leq C d^{3}$, and applying \cref{thm1}, 
we have the following theorem. 

\begin{theorem}
	\label{thm-graph-dep}
	Let $\{ X_i , i \in \mathcal{V} \}$ be a field of random variables indexed by the vertices of a dependency graph $\mathcal{G} = (\mathcal{V}, \mathcal{E})$.  Assume that $\E  X_i  = 0$. Let $S = \sum_{i \in \mathcal{V}} X_i $, $Y_i = \sum_{j \in A_i} X_j $ and let $V = \sqrt{ (
		\sum_{i \in \mathcal{V}} (X_i Y_i - \bar X \bar Y) )_{+} }$. Put $W = S / V$ and $n = \lvert \mathcal{V} \rvert$. Let $d$ be the maximal degree of $\mathcal{G}$. Then, we have 
	\begin{align*}
		\sup_{z \in \R} \lvert \P(W \leq z) - \Phi(z) \rvert
		& \leq C d^{9} ( 1 + d^{6} n^{1/3} \gamma^{2/3} ) \bigl( n^{-1/2} + \gamma  \bigr) ,
		% & \quad + C \sum_{i \in \mathcal{V}} \P( \lvert X_i \rvert > \sigma / (2d) ), 
	\end{align*}
	where 
	\begin{math}
		\gamma =  \sum_{i \in \mathcal{V}}\E \lvert X_i / \sigma\rvert^3 ,
	\end{math}
	and $\sigma^2 = \sum_{i \in \mathcal{V}} \E \{ X_i Y_i \}$.
\end{theorem}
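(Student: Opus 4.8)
The plan is to deduce \cref{thm-graph-dep} from \cref{thm1}, so there are three things to do: check that (LD1) and (LD2) hold for the graph-dependence structure with the stated $A_i$ and $B_i$; bound the combinatorial constant $\kappa$ in terms of the maximal degree $d$; and convert $\beta_0,\beta_2,\beta_3,\theta$ into expressions in $\gamma$ and $n$. Two preliminary reductions cost nothing: one may assume $\gamma<\infty$ (otherwise the asserted bound is vacuous), and one may assume $d\geq 1$ (if $\mathcal G$ has no edges the $X_i$ are independent and the conclusion already follows from \cref{thm1}). I would also record at the outset that $A_i$ is to be read as the \emph{closed} neighbourhood, i.e.\ $i\in A_i$; this is the convention under which $Y_i=\sum_{j\in A_i}X_j$ and, since $\E\{X_iX_j\}=0$ whenever $i\neq j$ and $i,j$ are non-adjacent, $\sum_{i\in\mathcal V}\E\{X_iY_i\}=\Var(S)=\sigma^2$, so that the $\sigma$ of \cref{thm1} is the one appearing in the statement.

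First I would verify the two dependence conditions. For (LD1), fix $i$ and apply the dependency-graph property to $\Gamma_1=\{i\}$ and $\Gamma_2=\mathcal V\setminus A_i$: no edge joins these sets, hence $X_i$ is independent of $\{X_j:j\notin A_i\}$. For (LD2), take $\Gamma_1=A_i$ and $\Gamma_2=\mathcal V\setminus B_i$ with $B_i=\bigcup_{j\in A_i}A_j$; if some edge had endpoints $j\in A_i$ and $k\notin B_i$, then $k\in A_j\subset B_i$, a contradiction, so $\{X_j:j\in A_i\}$ is independent of $\{X_j:j\notin B_i\}$, and $B_i\supset A_i$ is clear.

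Next I would estimate $\kappa$. Since $A_j$ is the set of vertices at graph distance at most $1$ from $j$ and $B_i$ the set of vertices at distance at most $2$ from $i$, the condition $A_j\cap B_i\neq\emptyset$ is equivalent to $i$ and $j$ being at distance at most $3$, and in a graph of maximal degree $d$ the number of such $j$ is at most $1+d+d(d-1)+d(d-1)^2\leq Cd^3$. Similarly $i\in B_j$ forces distance at most $2$, giving at most $1+d+d(d-1)\leq Cd^2$ such $j$. Hence $\kappa\leq Cd^3$ is admissible. By the remark following \cref{thm1} together with \cref{eq-theta-bound}, $\beta_3\leq\gamma$, $\kappa\beta_2\leq\kappa^2\gamma$, $\beta_0\leq\kappa^2\beta_2\leq\kappa^3\gamma$ and $\theta\leq\kappa\,n^{1/3}\gamma^{2/3}$; substituting $\kappa\leq Cd^3$ into \cref{eq-thm1} then shows the right-hand side is at most $C\bigl(d^9\gamma+d^9n^{1/3}\gamma^{5/3}+d^{3/2}n^{-1/2}+d^{9/2}n^{-1/6}\gamma^{2/3}\bigr)$, and each of these four terms is dominated by one of $Cd^9n^{-1/2}$, $Cd^9\gamma$, $Cd^{15}n^{-1/6}\gamma^{2/3}$, $Cd^{15}n^{1/3}\gamma^{5/3}$---that is, by a term of $Cd^9\bigl(1+d^6n^{1/3}\gamma^{2/3}\bigr)\bigl(n^{-1/2}+\gamma\bigr)$, which is the claimed bound.

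I do not expect a genuine obstacle here: the statement is an application of \cref{thm1} and the substance is bookkeeping. The two points that need a little care are the ball-counting estimates behind $\kappa=O(d^3)$---notably, that $A_j\cap B_i\neq\emptyset$ corresponds to distance $3$, not $2$---and the convention $i\in A_i$, which is precisely what makes $\sum_i\E\{X_iY_i\}=\Var(S)$ and lets \cref{thm1} be applied without modification. After that, everything reduces to inserting the elementary moment estimates from the remark into \cref{eq-thm1}.
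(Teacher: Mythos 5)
Your proposal is correct and follows exactly the route the paper intends: verify (LD1)--(LD2) for the closed neighbourhoods $A_i$ and $B_i=\cup_{j\in A_i}A_j$, take $\kappa\leq Cd^3$ from the distance-$3$ ball count, and feed the elementary estimates $\beta_3\leq\gamma$, $\kappa\beta_2\leq\kappa^2\gamma$, $\beta_0\leq\kappa^3\gamma$, $\theta\leq\kappa n^{1/3}\gamma^{2/3}$ into \cref{eq-thm1}. The paper states this derivation in one line without details; your write-up supplies the bookkeeping (including the necessary reading $i\in A_i$, which makes $\sum_i\E\{X_iY_i\}=\Var(S)$) and the arithmetic checks out.
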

Graph dependency was firstly discussed in \cite{Bal89}, where a Berry--Esseen bound for non-self-normalized version of $W$ is also proved. However, the self-normalized Berry--Esseen bound is new. 

\section{Some preliminary lemmas and propositions}%
\label{sec:proof_of_theorem_1_1}
In this section, we first prove some preliminary lemmas, and then prove an important concentration inequality, which is of independent interest. 

\subsection{Some Preliminary lemmas}%
\label{sub:proof-main}

Let
\begin{equ}
	\bar X_i & \coloneqq X_i \I ( \lvert X_i \rvert \leq \sigma/\kappa), & \bar Y_i & \coloneqq \sum_{j \in A_i} \bar X_i, & \bar \sigma & = \Bigl( \sum_{j \in \mathcal{J}} \E \{ \bar X_j \bar Y_j \} \Bigr)^{1/2},\\
	\bar S & \coloneqq   \sum_{i\in \mathcal{J}} \bar X_i	, & \bar V &\coloneqq \psi \Bigl( \sum_{i \in \mathcal{J}} \bar X_i \bar Y_i \Bigr),  &
	\bar W & \coloneqq \bar S / \bar V,
    \label{eq-barS}
\end{equ}
where 
\begin{equ}
	\psi(x) = ((x \vee 0.25\sigma^2) \wedge 2\sigma^2)^{1/2}. 
    \label{eq-psi}
\end{equ}
The following lemma provides bounds for $\bar \sigma^2$ and some tail and moment inequalities for $\{ \bar X_i, i \in \mathcal{J} \}$. 
\begin{lemma}
	\label{lem-1}
	Assume that (LD1) holds. 
	We have 
	\begin{gather}
		\bigl\lvert \bar \sigma^2 - \sigma^2 \bigr\rvert 
		 \leq 3 \kappa\sigma^2 \beta_2, 
        \label{eq-l11}\\
		\E \biggl(\sum_{i \in \mathcal{J}} (\bar X_i \bar Y_i - \E \{ \bar X_i \bar Y_i \}) \biggr)^2  \leq 
			\kappa^2 \sigma^4 \beta_3,
		\label{eq-l13}\\
	\P \biggl\{ \biggl\lvert \sum_{i \in \mathcal{J}} (\bar X_i \bar Y_i - \E \{ \bar X_i \bar Y_i \}) \biggr\rvert \geq \frac{\sigma^2}{2} \biggr\} \leq 
			4 \kappa^2 \beta_3. 
	\label{eq-l14}	
	\end{gather}
	Specially, if $\beta_2 \leq 1/(150\kappa)$, we have 
	\begin{align}
		0.98 \sigma^2 \leq \bar \sigma^2 \leq 1.02 \sigma^2 . \label{eq-l16}
	\end{align}
\end{lemma}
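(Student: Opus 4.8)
The plan is to prove the four assertions essentially by direct moment computations, exploiting the truncation level $\sigma/\kappa$ and the local dependence condition (LD1). Throughout I would write $\xi_i = \bar X_i \bar Y_i - \E\{\bar X_i \bar Y_i\}$ so that the target sums become $\sum_{i} \xi_i$, and I would repeatedly use that $\lvert \bar X_i\rvert \le \sigma/\kappa$ and $\lvert \bar Y_i\rvert = \lvert \sum_{j \in A_i} \bar X_j\rvert \le \lvert A_i\rvert \sigma/\kappa \le \kappa \sigma/\kappa = \sigma$, together with the fact that $\xi_i$ depends only on $\{X_j : j \in B_i\}$ (since $\bar X_i \bar Y_i$ is a function of $\{X_j : j \in A_i\} \cup \{X_i\} \subset \{X_j : j \in B_i\}$), hence by (LD1)--(LD2) it is independent of $\xi_j$ whenever $B_i \cap B_j = \emptyset$... but more to the point, for the variance bound I only need that $\xi_i$ is a function of $\{X_j : j \in A_i \cup \{i\}\}$ and use (LD1) to see $\E\{\xi_i \xi_j\} = 0$ unless the relevant neighborhoods intersect.

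\emph{Step 1 (proof of \cref{eq-l11}).} Write $\sigma^2 - \bar\sigma^2 = \sum_{i}\sum_{j \in A_i}\left(\E\{X_i X_j\} - \E\{\bar X_i \bar X_j\}\right)$. On the event where both $\lvert X_i\rvert, \lvert X_j\rvert \le \sigma/\kappa$ the summands agree, so the difference is controlled by terms where at least one variable exceeds $\sigma/\kappa$; splitting $X_i X_j - \bar X_i \bar X_j = X_i X_j \I(\lvert X_i\rvert > \sigma/\kappa) + \bar X_i X_j \I(\lvert X_j\rvert > \sigma/\kappa)$ and bounding $\lvert \bar X_i\rvert \le \sigma/\kappa$, then summing over $j \in A_i$ using $\lvert A_i\rvert \le \kappa$ and Cauchy--Schwarz should produce a bound of the shape $\kappa \sigma^2 \beta_2$ with a small absolute constant; tracking constants carefully gives the factor $3$. \emph{Step 2 (proof of \cref{eq-l13}).} Expand $\E(\sum_i \xi_i)^2 = \sum_i \sum_j \E\{\xi_i \xi_j\}$; by (LD1) only pairs with $A_j \cap (A_i \cup \{i\}) \ne \emptyset$ (equivalently $j$ ranging over a set of size at most $\kappa$ for each $i$) contribute, and for each such pair $\lvert \E\{\xi_i \xi_j\}\rvert \le \E\{\lvert \bar X_i \bar Y_i \bar X_j \bar Y_j\rvert\} + \cdots$, which after bounding $\lvert \bar Y_i\rvert \le \sigma$ and $\lvert \bar X_j \bar Y_j\rvert \le \sigma^2/\kappa \cdot \text{(stuff)}$ reduces to something comparable to $\sigma \cdot \E\lvert \bar X_i\rvert \cdot \sigma^2 \le \sigma^2 \cdot \sigma \E\lvert \bar X_i\rvert^2/(\sigma/\kappa)$; summing and using the definition of $\beta_3$ (the third-moment truncated sum) yields $\kappa^2 \sigma^4 \beta_3$. \emph{Step 3 (proof of \cref{eq-l14}).} This is immediate from \cref{eq-l13} and Chebyshev's inequality: $\P\{\lvert\sum_i \xi_i\rvert \ge \sigma^2/2\} \le 4 \sigma^{-4} \E(\sum_i \xi_i)^2 \le 4\kappa^2\beta_3$. \emph{Step 4 (proof of \cref{eq-l16}).} If $\beta_2 \le 1/(150\kappa)$ then \cref{eq-l11} gives $\lvert \bar\sigma^2 - \sigma^2\rvert \le 3\kappa\sigma^2 \cdot 1/(150\kappa) = \sigma^2/50 = 0.02\sigma^2$, which is exactly the claim.

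The main obstacle is Step 2: getting the variance bound with the stated power $\kappa^2$ and with $\beta_3$ (a \emph{third} moment quantity) rather than a fourth-moment quantity. The point is that each $\xi_i$ carries a factor $\bar X_i$ with $\lvert \bar X_i\rvert \le \sigma/\kappa$, so in the product $\xi_i \xi_j$ one can trade one power of $\lvert \bar X_j\rvert$ (or $\lvert \bar X_i\rvert$) for a factor $\sigma/\kappa$ at the cost of nothing, converting what looks like a fourth-moment sum into $(\sigma/\kappa) \cdot \sigma \cdot \sum_i \E\lvert \bar X_i\rvert^2 \le \sigma^2 \cdot (\sigma/\kappa) \cdot \sum_i \E\lvert X_i\rvert^2\I(\lvert X_i\rvert \le \sigma/\kappa)$; rewriting $\E\lvert X_i\rvert^2 \I(\cdots)$ is not directly $\sigma^3\beta_3$, so one instead keeps one power of $\lvert \bar X_i\rvert \le \sigma/\kappa$ \emph{inside} to get $\E\lvert X_i\rvert^3\I(\cdots)$ and pulls the remaining $\sigma/\kappa$ and $\sigma$ (from $\lvert\bar Y\rvert$) out; careful bookkeeping of which factor absorbs which bound, and of the number of nonzero covariance terms (at most $\kappa$ per index via the definition of $\kappa$), is what produces the clean $\kappa^2 \sigma^4 \beta_3$. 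Once the variance bound is in hand the remaining items are routine.
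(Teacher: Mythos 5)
Your Steps 1, 3 and 4 follow the paper's route and are fine (the paper splits $X_iX_j-\bar X_i\bar X_j$ into the three terms $(X_i-\bar X_i)\bar X_j$, $\bar X_i(X_j-\bar X_j)$, $(X_i-\bar X_i)(X_j-\bar X_j)$ rather than your two-term telescoping, but both give the constant $3$ after summing over the neighborhoods; \cref{eq-l14} is indeed just Chebyshev, and \cref{eq-l16} is immediate). The problem is Step 2, which is the heart of the lemma, and there the sketch as written does not close.

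First, the moment bookkeeping goes the wrong way. You reduce the cross term $\E\lvert\bar X_i\bar Y_i\bar X_j\bar Y_j\rvert$ to something of the form $\sigma^3\,\E\lvert\bar X_i\rvert$ and then try to pass to $\E\lvert\bar X_i\rvert^2$ and ultimately $\E\lvert\bar X_i\rvert^3$. But truncation at level $\sigma/\kappa$ only gives $\E\lvert\bar X_i\rvert^{p}\leq(\sigma/\kappa)^{p-q}\E\lvert\bar X_i\rvert^{q}$ for $p\geq q$, i.e.\ it bounds \emph{higher} moments by \emph{lower} ones; the inequalities $\E\lvert\bar X_i\rvert\leq(\kappa/\sigma)\E\lvert\bar X_i\rvert^{2}$ and $\E\lvert\bar X_i\rvert^{2}\leq(\kappa/\sigma)\E\lvert\bar X_i\rvert^{3}$ are false in general (take $\bar X_i$ tiny). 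Your closing remark about ``keeping one power inside'' does not repair this: keeping one power of $\bar X_i$ yields $\E\lvert\bar X_i\rvert$, not $\E\lvert\bar X_i\rvert^{3}$. The working route (the paper's) is to first symmetrize, $\lvert\E\{\xi_i\xi_j\}\rvert\leq\tfrac12\E\{\bar X_i^2\bar Y_i^2\}+\tfrac12\E\{\bar X_j^2\bar Y_j^2\}$, then expand $\bar Y_i^2\leq\kappa\sum_{k\in A_i}\bar X_k^2$ and apply Cauchy's inequality so that every term is a \emph{fourth} moment $\E\bar X^4$ of a single variable, and only then descend via $\E\bar X^4\leq(\sigma/\kappa)\E\lvert\bar X\rvert^{3}$. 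One must pass through fourth moments of individual summands; passing through first or second moments is fatal.

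Second, your justification for which covariances vanish is insufficient. You assert that (LD1) alone gives $\E\{\xi_i\xi_j\}=0$ unless $A_j\cap(A_i\cup\{i\})\neq\emptyset$, on the grounds that $\xi_i$ is a function of $\{X_k:k\in A_i\}$. But (LD1) is a statement about a \emph{single} $X_i$ being independent of the variables outside $A_i$; it does not imply that the two blocks $\{X_k:k\in A_i\}$ and $\{X_k:k\in A_j\}$ are independent as random vectors when $A_i\cap A_j=\emptyset$. What is actually used is the (LD2)-type block independence of $\{X_k:k\in A_i\}$ from $\{X_k:k\notin B_i\}$, which kills $\E\{\xi_i\xi_j\}$ whenever $A_j\cap B_i=\emptyset$ and leaves at most $\kappa$ contributing indices $j$ per $i$ --- this is exactly how $\kappa$ is defined, and it is where one factor of $\kappa$ in the bound $\kappa^2\sigma^4\beta_3$ comes from (the other comes from $\sum_i\E\{\bar X_i^2\bar Y_i^2\}\leq\kappa\sigma^4\beta_3$).
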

\begin{proof}
	[Proof of \cref{lem-1}]
	We first prove \cref{eq-l11}. Note that for any $i,j \in \mathcal{J}$, 
	\begin{align*}
		\MoveEqLeft\lvert \E \{ X_i X_j \} - \E \{ \bar X_i \bar X_j \} \rvert\\
		& \leq \E \lvert (X_i - \bar X_i) \bar X_j \rvert + \E \lvert \bar X_i (X_j - \bar X_j) \rvert + \E \lvert (X_i - \bar X_i) (X_j - \bar X_j) \rvert\\
		& \leq \frac{\sigma}{\kappa} (\E \lvert X_i  \I ( \lvert X_i \rvert > \sigma/\kappa ) \rvert + \E \lvert X_j \I ( \lvert X_j \rvert > \sigma /\kappa) \rvert ) \\
		& \quad + \E \{ |X_i X_j| \I(|X_i| > \sigma/\kappa , \lvert X_j \rvert > \sigma/\kappa) \}\\
		& \leq 1.5 \bigl( \E \{|X_i|^2 \I( \lvert X_i \rvert > \sigma/\kappa )\} + \E \{|X_j|^2 \I( \lvert X_j \rvert > \sigma/\kappa )\} \bigr). 
	\end{align*}
	Thus, 
	\begin{align*}
		\bigl\lvert  \bar \sigma^2  - \sigma^2 \bigr\rvert
		& = \biggl\lvert \sum_{i \in \mathcal{J}} \sum_{j \in A_i} \E \{ X_i X_j \} - \sum_{i \in \mathcal{J}} \sum_{j \in A_i} \E \{ \bar X_i \bar X_j \} \biggr\rvert\\
		& \leq 1.5 \sum_{i \in \mathcal{J}} \sum_{j \in A_i} ( \E \{\lvert X_i \rvert^2 \I( \lvert X_i \rvert > \sigma/\kappa ) +  \lvert X_j \rvert^2 \I( \lvert X_j \rvert > \sigma/\kappa ) \} ) \\
		& \leq 1.5 \sum_{i \in \mathcal{J}} \sum_{j \in A_i}  \E \{\lvert X_i \rvert^2 \I( \lvert X_i \rvert > \sigma/\kappa ) \}   \\
		& \quad + 1.5 \sum_{j \in \mathcal{J}} \sum_{i : j \in A_i} \E \{ \lvert X_j \rvert^2 \I( \lvert X_j \rvert > \sigma/\kappa ) \} ) \\
		& \leq 3 \sigma^2 \kappa \beta_2 .  
	\end{align*}
	This proves \cref{eq-l11}. Specially, \cref{eq-l16} follows directly from \cref{eq-l11}.

	Now, by Cauchy's inequality, we have with $c = \kappa^2$, 
	\begin{equ}
		\E \{ \bar X_i^2 \bar Y_i^2 \}
		& \leq \frac{c}{2} \E \bar X_i^4 + \frac{\kappa^3}{2c} \sum_{j \in A_i} \E \bar X_j^4 = \frac{\kappa^2}{2} \E \bar X_i^4 + \frac{\kappa}{2} \sum_{j \in A_i} \E \bar X_j^4 \\
		& \leq \frac{\kappa^2 \sigma}{2 \tau} \E \bar X_i^3 + \frac{\kappa\sigma}{2\tau} \sum_{j \in A_i} \E \bar X_j^3, 
        \label{eq-EXYi2}
	\end{equ}
	where we used the fact that $\lvert \bar X_i \rvert \leq \sigma/\kappa$ in the last line. 
	Taking summation over $i \in \mathcal{J}$ on both sides of \cref{eq-EXYi2} yields 
	\begin{equ}
		\sum_{i \in \mathcal{J}} \E \{ \bar X_i^2 \bar Y_i^2 \}
		& \leq \kappa \sigma^4 \beta_3. 
        \label{eq-a1.1-aa}
	\end{equ}
	% Similarly, 
	% \begin{equ}
	% 	\sum_{j \in \mathcal{J}} \sum_{i : j \in B_i} \E \{ \bar X_j^2 \bar Y_j^2 \}
	% 	& \leq \kappa^2 \sigma^4 \beta_3. 
        % \label{eq-a1.1-bb}
	% \end{equ}
	By \cref{eq-a1.1-aa}, we have the left hand side of \cref{eq-l13} is 
	\begin{align*}
		\MoveEqLeft[1]\E \biggl(\sum_{i \in \mathcal{J}}\sum_{j \in B_i} (\bar X_i \bar Y_i - \E \{ \bar X_i \bar Y_i \})(\bar X_j \bar Y_j - \E \{ \bar X_j \bar Y_j \}) \biggr)\\
		& =   \sum_{i \in \mathcal{J}}\sum_{j \in B_i} \biggl\{ \frac{1}{2} \E \{ \bar X_i^2 \bar Y_i^2 \} + \frac{1}{2} \E \{ \bar X_j^2 \bar Y_j^2 \} \biggr\} \\
		& =   \frac{1}{2} \biggl(\sum_{i \in \mathcal{J}}\sum_{j \in B_i}   \E \{ \bar X_i^2 \bar Y_i^2 \} + \sum_{j \in \mathcal{J}} \sum_{i : j \in B_i} \E \{ \bar X_j^2 \bar Y_j^2 \}  \biggr)\\
		& \leq \kappa^2\sigma^4\beta_3.	
	\end{align*}
	This proves \cref{eq-l13}. 
	Moreover, \cref{eq-l14} follow immediately from \cref{eq-l13} and the Markov inequality. 
\end{proof}

The following lemma will be useful in the proof of our main result. 
Let $\xi_i = \bar X_i / \bar V, \eta_i = \bar Y_i / \bar V$ and $W^{(i)} = W - \eta_i$. 
\begin{lemma}
	\label{lem2.2}
	Assume that \textup{(LD1)} holds and assume that $\beta_2 \leq 1/(150\kappa)$. Let $f$ be any absolutely continuous function such that $\lVert f \rVert \leq 1$ and $\lVert f' \rVert \leq 1$. We have 
	\begin{equ}
		\sum_{i \in \mathcal{J}} \bigl\lvert \E \{\xi_i f(\bar W - \eta_i)\} \bigr\rvert \leq 62 \kappa^2 \beta_3 + 2 \tau \beta_2. 
        \label{eq-lem22-1}
	\end{equ}
\end{lemma}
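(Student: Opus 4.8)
Fix $i \in \mathcal{J}$ and set $\mathcal{G}_i := \sigma(X_j : j \notin A_i)$; by (LD1), $\bar X_i$ is independent of $\mathcal{G}_i$. The plan is to exploit this independence together with two elementary facts that hold under the standing hypotheses: $|\bar X_i| \le \sigma/\kappa$ by truncation, and $\tfrac12\sigma \le \bar V \le \sqrt2\sigma$ for every $\omega$, because $\psi$ in \cref{eq-psi} clips its argument into $[\tfrac14\sigma^2, 2\sigma^2]$; the second fact makes all denominators uniformly bounded and $1/\bar V$ globally Lipschitz as a function of its defining sum. Write $\bar W - \eta_i = (\bar S - \bar Y_i)/\bar V = \bigl(\sum_{j \notin A_i}\bar X_j\bigr)/\bar V$: its numerator is $\mathcal{G}_i$-measurable, and the only dependence of $\bar W - \eta_i$ on $X_i$ sits in $\bar V$. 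Introduce the $\mathcal G_i$-measurable proxy $\bar V^{[i]} := \psi(\sum_{j \notin N_i}\bar X_j\bar Y_j)$, where $N_i := \{j : (\{j\}\cup A_j)\cap A_i \neq \emptyset\}$ collects the indices of the terms of $\sum_j \bar X_j\bar Y_j$ that involve some $X_\ell$ with $\ell \in A_i$; one checks $|N_i| \le 2\kappa$ and $|\{i : j \in N_i\}| \le 2\kappa^2$ from the definition of $\kappa$. Put $\tilde g_i := (\bar V^{[i]})^{-1} f\bigl((\bar S - \bar Y_i)/\bar V^{[i]}\bigr)$, a $\mathcal G_i$-measurable approximant of $\bar V^{-1}f(\bar W - \eta_i)$.

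With this, $\E\{\xi_i f(\bar W-\eta_i)\} = \E\{\bar X_i \tilde g_i\} + \E\{\bar X_i(\bar V^{-1}f(\bar W-\eta_i) - \tilde g_i)\}$. In the first term $\bar X_i$ and $\tilde g_i$ are independent, so it equals $(\E\bar X_i)\,\E\tilde g_i$, of absolute value $\le \tfrac2\sigma|\E\bar X_i|$. Since $\E X_i = 0$ gives $|\E\bar X_i| = |\E\{X_i\I(|X_i| > \sigma/\kappa)\}| \le (\kappa/\sigma)\E\{X_i^2\I(|X_i| > \sigma/\kappa)\}$, summation over $i$ yields $\sum_i |\E\bar X_i| \le \kappa\sigma\beta_2$ and hence a contribution of order $\kappa\beta_2$ — the source of the $2\tau\beta_2$ term. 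For the second (error) term, $\|f\|\le 1$, $\|f'\|\le 1$ and $\bar V, \bar V^{[i]} \ge \tfrac12\sigma$ give $|\bar V^{-1}f(\bar W-\eta_i) - \tilde g_i| \le C\sigma^{-3}\bigl(1 + |\bar S - \bar Y_i|/\sigma\bigr)\,|\Delta_i|$ with $\Delta_i := \sum_{j \in N_i}\bar X_j\bar Y_j$, so the error is governed by sums $\sigma^{-3}\E\{|\bar X_i|\,|\Delta_i|\,(1 + |\bar S - \bar Y_i|/\sigma)\}$. These are then estimated using the second-moment inequalities of \cref{lem-1} — namely $\E\bigl(\sum_j(\bar X_j\bar Y_j - \E\{\bar X_j\bar Y_j\})\bigr)^2 \le \kappa^2\sigma^4\beta_3$ and $\sum_j\E\{\bar X_j^2\bar Y_j^2\} \le \kappa\sigma^4\beta_3$ — together with the multiplicity bound for $N_i$; after summation over $i$ this produces the $O(\kappa^2\beta_3)$ part of \cref{eq-lem22-1}.

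The main obstacle is this last step: carrying it out so that the final bound is genuinely of order $\kappa^2\beta_3$ and free of $\theta$, of $\sqrt{\beta_3}$, and of any factor of $|\mathcal J|$ — all of which a crude Cauchy–Schwarz on $\E\{|\bar X_i|\,|\Delta_i|\}$, or a third-moment splitting of the triple products hidden in $\Delta_i$, would introduce (note in particular that $\sum_i \E\bar X_i^2$ is itself only controlled up to a factor $1+\theta$). The remedy is not to discard the independence coming from $\mathcal G_i$ too early: one further splits $\Delta_i$, and more generally the centred sum $\sum_j(\bar X_j\bar Y_j - \E\{\bar X_j\bar Y_j\})$, into its $\mathcal G_i$-measurable part — which, multiplied by $\bar X_i$ against a $\mathcal G_i$-measurable factor, again contributes only through the small quantity $\E\bar X_i$ — and its genuinely $A_i$-local part, to which the $L^2$-estimates of \cref{lem-1} apply with the correct neighbourhood multiplicities, the $\bar X_j\bar Y_j$-blocks being kept intact rather than expanded. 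Tracking the numerical constants through this scheme yields the coefficient $62$.
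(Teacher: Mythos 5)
Your overall architecture is the same as the paper's: introduce a leave-one-out normalizer $\bar V^{(i)}$ depending only on $\{X_j : j\notin A_i\}$, let the resulting independent term contribute only through $\sum_i\lvert\E\bar X_i\rvert\leq\kappa\sigma\beta_2$ (this part of your argument is correct and matches the paper's $T_2$), and control the replacement error through the Lipschitz bound on $1/\bar V$, which reduces everything to sums of the form $\sigma^{-3}\sum_i\E\{(1+\lvert\bar S-\bar Y_i\rvert/\sigma)\lvert\bar X_i\rvert\,\lvert\Delta_i\rvert\}$. The gap is in how you finish this last estimate. You reject the third-moment (AM--GM) splitting of the triple products on the grounds that it would introduce $\theta$, $\sqrt{\beta_3}$ or $\lvert\mathcal J\rvert$; but this is precisely the route the paper takes in \cref{eq-VVz}, and it works cleanly. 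The point you are missing is that after writing $\lvert\bar X_i\bar X_j\bar X_k\rvert\leq\tfrac13(\lvert\bar X_i\rvert^3+\lvert\bar X_j\rvert^3+\lvert\bar X_k\rvert^3)$ and reindexing, each cube $\lvert\bar X_l\rvert^3$ appears with multiplicity at most $\kappa^2$, and the weight $\E\{(1+2\lvert\bar S-\bar Y_i\rvert/\sigma)\lvert\bar X_l\rvert^3\}$ factorizes (after enlarging $\bar Y_i$ to $\bar Y_{il}$ as in \cref{eq-lem22-21,eq-lem22-22}) into an $O(1)$ constant times $\E\lvert\bar X_l\rvert^3$, using only $\E\lvert\bar S\rvert\leq\bar\sigma\leq1.02\sigma$. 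Summing gives $O(\kappa^2\sigma^3\beta_3)$ with no $\theta$, no square roots, and no $\lvert\mathcal J\rvert$. Your concern about $\sum_i\E\bar X_i^2$ being controlled only up to $1+\theta$ is a red herring: second moments of the $\bar X_i$ are never needed.

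By contrast, the remedy you actually propose --- keeping the blocks $\bar X_j\bar Y_j$ intact and invoking the $L^2$ estimates \cref{eq-l13} and \cref{eq-a1.1-aa} on the ``genuinely $A_i$-local part'' of $\Delta_i$ --- does not close the argument. Any pairing of $\lvert\bar X_i\rvert$ against an intact block (by Cauchy--Schwarz or by $\E\{\lvert\bar X_i\rvert\,\lvert\bar X_j\bar Y_j\rvert\}\leq\tfrac1{2c}\E\bar X_i^2+\tfrac{c}{2}\E\bar X_j^2\bar Y_j^2$) reintroduces either $\sum_i\E\bar X_i^2$, which is of order $\theta\sigma^2$, or a factor $\beta_3^{1/2}$; and the further splitting of $\Delta_i$ into a $\mathcal G_i$-measurable piece and a local piece cannot help, because $\Delta_i$ was defined exactly as the non-$\mathcal G_i$-measurable residue, and its local piece still consists of products $\bar X_k\bar X_l$ with $k\in A_i$ multiplied by $\lvert\bar X_i\rvert$ --- objects for which the only way to reach $\beta_3$ is the third-moment conversion you set aside. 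As written, your proof does not establish the $62\kappa^2\beta_3$ term; you should replace the $L^2$ scheme in your final step by the expansion and AM--GM argument of \cref{eq-VVz}.
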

\begin{proof}
Let $N(A_i) = \{j: A_i \cap A_j \neq \emptyset\}$, $N(B_i) = \{ j: B_i \cap A_j \neq \emptyset \}$, and
\begin{align*}
	\bar V^{(i)} = \psi \biggl( \sum_{k \in A_i^c} \sum_{ l \in A_i^c \cap A_k } \bar X_k  \bar X_l  \biggr).
\end{align*}
The left hand side of \cref{eq-lem22-1} can be bounded by 
\begin{align*}
	\sum_{i \in \mathcal{J}} \bigl\lvert \E \{\xi_i f(\bar W - \eta_i)\} \bigr\rvert
	& \leq \sum_{i \in \mathcal{J}} \biggl\lvert \E \biggl\{ \frac{\bar X_i}{\bar V} f \Bigl( \frac{\bar S - \bar Y_i}{\bar V} \Bigr) \biggr\} -  \E \biggl\{ \frac{\bar X_i}{\bar V^{(i)}} f \Bigl( \frac{\bar S - \bar Y_i}{\bar V^{(i)}} \Bigr)
	\biggr\}\biggr\rvert\\
	& \quad + \sum_{i \in \mathcal{J}} \biggl\lvert \E \biggl\{ \frac{\bar X_i}{\bar V^{(i)}} f \Bigl( \frac{\bar S - \bar Y_i}{\bar V^{(i)}} \Bigr)
	\biggr\} \biggr\rvert \\
	& \coloneqq T_1 + T_2	. 
\end{align*}
Recall that $\bar V \geq \sigma / 2$ and $\bar V^{(i)} \geq \sigma / 2$. 
Then, it follows that 
\begin{equ}
	\biggl\lvert \frac{1}{\bar V} - \frac{1}{\bar V^{(i)}} \biggr\rvert
	& \leq \frac{1}{\bar V \bar V^{(i)} (\bar V + \bar V^{(i)})} \biggl\lvert \sum_{j \in A_i} \sum_{k \in A_j} \bar X_j \bar X_k +  \sum_{j \in A_i^c} \sum_{k \in A_i \cap A_j } \bar X_j \bar X_k \biggr\rvert\\
	& \leq \frac{4}{\sigma^3} \biggl(\sum_{j \in A_i} \sum_{k \in A_j} \lvert \bar X_j \bar X_k \rvert + \sum_{j \in A_i^c} \sum_{k \in A_j \cap A_i} \lvert \bar X_j \bar X_k \rvert\biggr) .
    \label{eq-VVi}
\end{equ}
% Observe that 
% \begin{equ}
% 	\biggl\lvert \frac{1}{\bar V} - \frac{1}{\bar V^{(i)}} \biggr\rvert
% 	& \leq \frac{\bigl\lvert \sum_{j \in N(A_i)} \bar X_j \bar Y_j \bigr\rvert}{\bar V \bar V^{(i)} (\bar V + \bar V^{(i)})} 
% 	\leq \frac{4}{\sigma^3} \sum_{j \in N(A_i)} \lvert \bar X_j \bar Y_j \rvert
%     \label{eq-VVi}
% \end{equ}
By \cref{eq-VVi}, 
\begin{equ}
	T_1
	& \leq \sum_{i \in \mathcal{J}} \E \biggl\{\biggl( 1 + \frac{2\lvert \bar S - \bar Y_i \rvert}{\sigma} \biggr) \lvert \bar X_i \rvert \biggl\lvert \frac{1}{\bar V} - \frac{1}{\bar V^{(i)}} \biggr\rvert\biggr\}\\
	& \leq \frac{4}{\sigma^3} \sum_{i \in \mathcal{J}} \sum_{j \in A_i} \sum_{k \in A_j}\E \biggl\{\biggl( 1 + \frac{2\lvert \bar S - \bar Y_i \rvert}{\sigma} \biggr) \lvert \bar X_i \bar X_j \bar X_k \rvert \biggr\}\\
	& \quad + \frac{4}{\sigma^3} \sum_{i \in \mathcal{J}} \sum_{k \in A_i} \sum_{j : k \in A_j}\E \biggl\{\biggl( 1 + \frac{2\lvert \bar S - \bar Y_i \rvert}{\sigma} \biggr) \lvert \bar X_i \bar X_j \bar X_k \rvert \biggr\}\\
	% & \leq \frac{4}{3 \sigma^3} \sum_{i \in \mathcal{J}}\sum_{j \in N(A_i)} \sum_{k \in A_j} \E \biggl\{\biggl( 1 + \frac{2\lvert \bar S - \bar Y_i \rvert}{\sigma} \biggr) ( \lvert \bar X_i\rvert^3 + \lvert\bar  X_j \rvert^3 + \lvert \bar X_k \rvert^3 )\biggr\} \\
	& \leq \frac{8 \kappa^2}{3\sigma^3} \sum_{i \in \mathcal{J}}\E \biggl\{ \biggl( 1 + \frac{2\lvert \bar S - \bar Y_i \rvert}{\sigma} \biggr) \lvert \bar X_i^3 \rvert\biggr\} \\
	& \quad  + \frac{8 \kappa}{3\sigma^2} \sum_{j \in \mathcal{J}} \sum_{i : j \in A_i} \E \biggl\{  \biggl( 1 + \frac{2\lvert \bar S - \bar Y_i \rvert}{\sigma} \biggr) \lvert \bar X_j^3 \rvert		 \biggr\} \\
	& \quad + \frac{8}{3\sigma^2} \sum_{k \in \mathcal{J}} \sum_{j : k \in A_j } \sum_{i: j \in A_i} \E \biggl\{  \biggl( 1 + \frac{2\lvert \bar S - \bar Y_i \rvert}{\sigma} \biggr) \lvert \bar X_k^3 \rvert		 \biggr\} . 
	% & \quad + \frac{4}{3\sigma^2} \sum_{k \in \mathcal{J}} \sum_{j \in A_k } \sum_{i: k \in A_i} \E \biggl\{  \biggl( 1 + \frac{2\lvert \bar S - \bar Y_i \rvert}{\sigma} \biggr) \lvert \bar X_k^3 \rvert		 \biggr\}. 
	% & \leq \frac{4 \kappa^2}{3\sigma^3} \sum_{i \in \mathcal{J}}\E \biggl\{ \biggl( 1 + \frac{2\lvert \bar S - \bar Y_i \rvert}{\sigma} \biggr) \lvert \bar X_i^3 \rvert\biggr\} \\
	% & \quad + \frac{8 \kappa}{3\sigma^2} \sum_{j \in \mathcal{J}} \sum_{i \in N(A_j)} \E \biggl\{  \biggl( 1 + \frac{2\lvert \bar S - \bar Y_i \rvert}{\sigma} \biggr) \lvert \bar X_j^3 \rvert		 \biggr\}.
    \label{eq-VVz}
\end{equ}
Let 
\begin{math}
	\bar Y_{ij} = \sum_{k \in A_i \cup A_j} \bar X_k. 
\end{math}
Then, by $\lvert \bar Y_{ij} - \bar Y_i \rvert \leq \sum_{k \in A_j} \lvert \bar X_k \rvert \leq \sigma $, we have 
\begin{equ}
	\E \biggl\{ \biggl( 1 + \frac{2\lvert \bar S - \bar Y_i \rvert}{\sigma} \biggr) \lvert \bar X_i \rvert^3	 \biggr\} 
	& = \E \biggl\{ \biggl( 1 + \frac{2\lvert \bar S - \bar Y_i \rvert}{\sigma} \biggr) \biggr\} \E \{ \lvert \bar X_i \rvert^3\}, \\
	\E \biggl\{ \biggl( 1 + \frac{2\lvert \bar S - \bar Y_i \rvert}{\sigma} \biggr) \lvert \bar X_j \rvert^3	 \biggr\} 
	& \leq \E \biggl\{ \biggl( 1 + \frac{2\lvert \bar S - \bar Y_{ij} \rvert}{\sigma} \biggr) \biggr\} \E \{ \lvert \bar X_j \rvert^3\} +  2  \E \lvert \bar X_j \rvert^3 .
    \label{eq-lem22-21}
\end{equ}
As $|\bar Y_i| \leq  \sigma, |\bar Y_{ij}| \leq 2\sigma$ and $\E |\bar S| \leq \bar\sigma \leq 1.02 \sigma$, we have  
\begin{equ}
	\E \lvert \bar S - \bar Y_i \rvert 
	\leq  2.02 \sigma, \quad \E \lvert \bar S - \bar Y_{ij} \rvert \leq   3.02 \sigma. 
    \label{eq-lem22-22}
\end{equ}
Substituting \cref{eq-lem22-21,eq-lem22-22} to \cref{eq-VVz} yields 
\begin{equ}
	T_{1} \leq 62 \kappa^2 \beta_3.
    \label{eq-R1}
\end{equ}

Moreover, observe that 
\begin{equ}
	\sum_{i \in \mathcal{J}} \lvert \E  \bar X_i  \rvert
	& \leq \sum_{i \in \mathcal{J}} \E \{ \lvert \bar X_i \rvert \I( \lvert \bar X_i \rvert > \sigma / \kappa ) \} \\
	& \leq \frac{\kappa}{\sigma} \sum_{i \in \mathcal{J}} \E \{ \lvert X_i \rvert^2 \I( \lvert X_i \rvert \geq \sigma / \kappa ) \} = \kappa \sigma \beta_2. 
    \label{eq-beta22}
\end{equ}
For $T_{2}$, as $\bar V^{(i)} \geq \sigma / 2$ and $\|f\| \leq 1$, by \cref{eq-beta22}, we have 
\begin{equ}
	\lvert T_{2} \rvert 
	& \leq \frac{2}{\sigma} \sum_{i \in \mathcal{J}} \lvert \E \{ \bar X_i \} \rvert  \leq 2 \kappa\beta_2. 
    \label{eq-R2}
\end{equ}
Combining \cref{eq-R1,eq-R2}, we complete the proof. 	
\end{proof}
The next lemma provides upper bounds for the fourth moments of $\sum_{i \in \mathcal{J}} \bar X_i$ and $\sum_{i \in \mathcal{J}} \bar Y_i$. 

\begin{lemma}
	\label{lem-4moment}
	Under (LD1) and (LD2), and assume that $\beta_2 \leq 1/(150 \kappa)$. 
	We have 
	\begin{align}
		\E \biggl( \sum_{i \in \mathcal{J}} \bar X_i \biggr)^4 & \leq 1161 (\theta + 1)\sigma^4,
		% \begin{cases}
		% 	1161 (\theta + 1)(\kappa^2/ \tau^2 + 1)\sigma^4 & \text{under \textup{(LD2)}},\\
		% 	1161 (\theta + 1)(\kappa^2/\tau^2 + 1)\sigma^4 & \text{under \textup{(LD2$'$)}},
		% \end{cases}  
		\label{eq-l3.3-1}
		\\
		\E \biggl( \sum_{i \in \mathcal{J}} \bar Y_i\biggr)^4 & \leq  1161 \kappa^2(\theta + 1)\sigma^4.
		% \begin{cases}
		% 	1161 (\theta + 1) \kappa^2 (\kappa^2 / \tau^2 + 1)\sigma^4 & \text{under \textup{(LD2)}},\\
		% 	1161 (\theta + 1) \kappa^2(\kappa^2 / \tau^2 + 1)\sigma^4 & \text{under \textup{(LD2$'$)}}.
		% \end{cases}
		\label{eq-l3.3-2}
	\end{align}
	As a consequence, 
	\begin{align}
		\E \biggl\{\biggl( \sum_{i \in \mathcal{J}} \bar X_i \biggr)^2 \biggl( \sum_{j \in \mathcal{J}} \bar Y_j\biggr)^2\biggr\} 
		& \leq 1161 \kappa (\theta + 1)\sigma^4.
		\label{eq-l3.3-3}
	\end{align}
\end{lemma}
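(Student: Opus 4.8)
The plan is to pass to bounded, centred versions of the truncated variables and then run a dependency-graph fourth-moment estimate. Write $\mu_i=\E\bar X_i$, $\tilde X_i=\bar X_i-\mu_i$, and $\tilde Y_i=\bar Y_i-\E\bar Y_i=\sum_{j\in A_i}\tilde X_j$. Since $j\in A_j\cap B_i$ for every $j\in A_i$ (neighbourhoods may be taken reflexive), one has $\lvert A_i\rvert\le\kappa$, whence $\lvert\bar X_i\rvert\le\sigma/\kappa$, $\lvert\bar Y_i\rvert\le\lvert A_i\rvert\,\sigma/\kappa\le\sigma$, and therefore $\lvert\tilde X_i\rvert\le2\sigma/\kappa$, $\lvert\tilde Y_i\rvert\le2\sigma$. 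By \cref{eq-beta22} and the hypothesis $\beta_2\le1/(150\kappa)$ we also get $\sum_i\lvert\mu_i\rvert\le\kappa\sigma\beta_2\le\sigma/150$ and $\sum_i\lvert\E\bar Y_i\rvert\le\kappa\sum_i\lvert\mu_i\rvert\le\kappa\sigma/150$. Using $(a+b)^4\le8a^4+8b^4$, it then suffices — up to adjusting the absolute constant — to bound $\E\bigl(\sum_i\tilde X_i\bigr)^4$ and $\E\bigl(\sum_i\tilde Y_i\bigr)^4$, the contributions of the (small) means $\sum_i\mu_i$ and $\sum_i\E\bar Y_i$ being absorbed.

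For $\E\bigl(\sum_i\tilde X_i\bigr)^4$ I would use the cumulant identity $\E\bigl(\sum_i\tilde X_i\bigr)^4=3\bigl(\E\bigl(\sum_i\tilde X_i\bigr)^2\bigr)^2+\sum_{i,j,k,l}\operatorname{cum}(\tilde X_i,\tilde X_j,\tilde X_k,\tilde X_l)$, together with the classical fact that the joint cumulant vanishes whenever $\{i,j,k,l\}$ splits into two non-empty mutually independent blocks; by (LD1)--(LD2) the surviving quadruples are exactly those "connected" through the neighbourhoods $A_\bullet$. The main term is handled by \cref{eq-l11,eq-l16}: $\E\bigl(\sum_i\tilde X_i\bigr)^2=\Var\bigl(\sum_i\bar X_i\bigr)=\bar\sigma^2-\sum_i\mu_i\E\bar Y_i\le1.02\,\sigma^2+\sigma^2/150$, so $3(\cdot)^2\le C\sigma^4$. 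For a connected quadruple I would bound $\lvert\operatorname{cum}(\tilde X_i,\tilde X_j,\tilde X_k,\tilde X_l)\rvert$ by $(2\sigma/\kappa)^2\,\E\lvert\tilde X_a\tilde X_b\rvert$ for a pair $(a,b)$ inside it with $b\in A_a$ — using the pointwise bound on the two remaining factors, and noting that the pair-product terms of the cumulant either vanish (since $\E\tilde X_a\tilde X_b=0$ unless $b\in A_a$ and $a\in A_b$) or are of the same shape — and then sum over connected quadruples by first fixing $(a,b)$ and counting the $O(\kappa^2)$ ways to complete it to a connected quadruple. This gives a total $\le C(\sigma/\kappa)^2\kappa^2\sum_i\sum_{j\in A_i}\E\lvert\tilde X_i\tilde X_j\rvert$, and $\sum_i\sum_{j\in A_i}\E\lvert\tilde X_i\tilde X_j\rvert\le\theta\sigma^2+C\sigma^2$ after writing $\tilde X=\bar X-\mu$ and using $\sum_i\lvert\mu_i\rvert\le\sigma/150$ with $\lvert\bar X_i\rvert\le\sigma/\kappa$. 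Collecting, $\E\bigl(\sum_i\tilde X_i\bigr)^4\le C(\theta+1)\sigma^4$, which gives \cref{eq-l3.3-1}; keeping track of constants yields $1161$.

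For \cref{eq-l3.3-2} I would run the same argument on the field $\{\bar Y_i\}_{i\in\mathcal{J}}$, which is again locally dependent: by (LD2), $\bar Y_i$ is $\sigma(X_j:j\in A_i)$-measurable and hence independent of $\{\bar Y_m:A_m\cap B_i=\emptyset\}$, a neighbourhood of size at most $\kappa$, while $\lvert\bar Y_i\rvert\le\sigma$. The only changes relative to the $\bar X$-computation are that the pointwise bound is now $\sigma$ rather than $\sigma/\kappa$ and that the relevant "edge sum" is controlled through $\sum_i\E\bar Y_i^2\le C\kappa(\theta+1)\sigma^2$ — obtained by expanding $\bar Y_i=\sum_{j\in A_i}\bar X_j$, reorganising by $j$, using $\lvert\{i:j\in A_i\}\rvert\le\kappa$ and $\lvert\Cov(\bar X_j,\bar X_k)\rvert\le\E\lvert\bar X_j\bar X_k\rvert+\lvert\mu_j\mu_k\rvert$ — producing the additional factor $\kappa^2$ and hence \cref{eq-l3.3-2}. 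Finally, \cref{eq-l3.3-3} is immediate from \cref{eq-l3.3-1,eq-l3.3-2} and the Cauchy--Schwarz inequality $\E\bigl\{\bigl(\sum_i\bar X_i\bigr)^2\bigl(\sum_j\bar Y_j\bigr)^2\bigr\}\le\bigl(\E\bigl(\sum_i\bar X_i\bigr)^4\bigr)^{1/2}\bigl(\E\bigl(\sum_j\bar Y_j\bigr)^4\bigr)^{1/2}$.

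The step I expect to be the main obstacle is the combinatorial bookkeeping in the fourth-moment expansion: pinning down precisely which index configurations survive under (LD1)--(LD2), estimating their contributions without picking up spurious powers of $\kappa$ (this is most delicate for \cref{eq-l3.3-2}, where one must arrange the count and the multiplicities $\lvert\{i:j\in A_i\}\rvert\le\kappa$ so that exactly $\kappa^2$ appears), and carrying the numerical constants through to reach $1161$. An alternative to the cumulant route is the Stein-type conditioning-and-peeling identity $\E\bigl(\sum_i\tilde X_i\bigr)^4=\sum_i\E\bigl[\tilde X_i\bigl(\sum_j\tilde X_j\bigr)^3\bigr]$, in which one replaces $\sum_j\tilde X_j$ by $\sum_{j\notin A_i}\tilde X_j+\tilde Y_i$ via (LD1) and then splits off $\sum_{j\notin B_i}\tilde X_j$ via (LD2); expanding and using \cref{eq-l11,eq-l13,eq-l16,eq-beta22} leads to the same estimates.
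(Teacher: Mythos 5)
Your route to \cref{eq-l3.3-1} (fourth-cumulant expansion plus a count of ``surviving'' quadruples) is genuinely different from the paper's, which uses the leave-one-out identity you list only as an alternative: it writes $\E(\bar S-\E\bar S)^4=\sum_i\E\{\tilde X_i((\sum_j\tilde X_j)^3-(\sum_{j\in A_i^c}\tilde X_j)^3)\}$ with $\tilde X_j=\bar X_j-\E\bar X_j$, expands the cube difference, and splits the resulting sums over $B_i$ versus $B_i^c$ so that (LD2) applies. Your version can be made to work, but one claim needs repair: under (LD1)--(LD2) the non-vanishing cumulants are \emph{not} exactly the quadruples connected through $A_\bullet$. (LD1) kills a $1$--$3$ split only when the three other indices lie outside $A_a$, and a $2$--$2$ split can only be killed by (LD2), which requires the complementary pair to lie outside $B_a$; so a quadruple $\{i,j,k,l\}$ with $j\in A_i$, $l\in A_k$ and $k\in B_i\setminus A_i$ survives even though it is $A$-disconnected. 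This is precisely why $\kappa$ is defined through $\lvert\{j:B_i\cap A_j\neq\emptyset\}\rvert$; with that definition the completion count is still $O(\kappa^2)$ and your estimate $C(\sigma/\kappa)^2\,\kappa^2\,(\theta+1)\sigma^2$ goes through, so this is a fixable imprecision (though I would not expect the constant $1161$ to drop out of this route).

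The genuine gap is in \cref{eq-l3.3-2}. In the cumulant identity for the field $\{\bar Y_i\}$ the leading term is $3\bigl(\Var(\sum_i\bar Y_i)\bigr)^2$, and you never control $\Var(\sum_i\bar Y_i)$: the quantity $\sum_i\E\bar Y_i^2$ that you bound is only the diagonal. Writing $\sum_i\bar Y_i=\sum_j\lambda_j\bar X_j$ with $\lambda_j=\lvert\{i:j\in A_i\}\rvert\le\kappa$, one only gets $\Var(\sum_i\bar Y_i)\le\kappa^2(\theta+1)\sigma^2$, so the leading term is of order $\kappa^4(\theta+1)^2\sigma^4$, which is not dominated by $\kappa^2(\theta+1)\sigma^4$. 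Nor is this an artifact of the method: take $X_1,\dots,X_n$ i.i.d.\ Rademacher and impose the $m$-dependence neighbourhoods $A_i=\{j:\lvert i-j\rvert\le m\}$; then $\lambda_j\approx 2m+1$, $\theta\approx 2m+1$, $\kappa\approx 6m+1$, and $\E(\sum_i\bar Y_i)^4\ge\bigl(\Var(\sum_i\bar Y_i)\bigr)^2\approx(2m+1)^4\sigma^4$, which exceeds $1161\,\kappa^2(\theta+1)\sigma^4\asymp m^3\sigma^4$ once $m$ is large. So this step cannot be closed as stated; the right-hand side needs an extra factor (e.g.\ $(\theta+1)^2$ in place of $\theta+1$). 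To be fair, the paper's own proof of \cref{eq-l3.3-2} is the single sentence ``using a similar argument'' and runs into the same obstruction, so you have in effect located a defect in the lemma rather than only in your sketch. Your derivation of \cref{eq-l3.3-3} from the first two displays by Cauchy--Schwarz is exactly the paper's.
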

\begin{proof}

We first prove the first inequality. Recall that $\E X_i = 0$ and thus
\begin{equ}
|\E \bar S| & = \biggl\lvert \sum_{i \in \mathcal{J}} \E  \bar X_i  \biggr\rvert
			   = \biggl\lvert \sum_{i \in \mathcal{J}} \E \{ X_i \I( |X_i| > \sigma/\kappa ) \} \biggr\rvert \leq \beta_2 \kappa\sigma. 
    \label{eq-Smean}
\end{equ}
Let $\bar X_{i,0} = \bar X_i - \E \{ \bar X_i \}$ and let $\bar Y_{i,0} = \sum_{j \in A_i} \bar X_{j,0}$. Under (LD1), we have 
\begin{align*}
		\E \lvert \bar S - \E \bar S \rvert^4 
		& = \sum_{i \in \mathcal{J}} \E \biggl\{ \bar X_{j,0} \biggl( \Bigl( \sum_{i \in \mathcal{J}} \bar X_{j,0} \Bigr)^3 - \Bigl( \sum_{i \in A_i^c} \bar X_{j,0} \Bigr)^3 \biggr) \biggr\}\\
		& = 3 \sum_{j \in \mathcal{J}} \sum_{j \in A_i} \E \biggl\{ \bar X_{i,0} \bar X_{j,0} \Bigl( \sum_{k \in \mathcal{J}} \bar X_{k,0} \Bigr)^2 \biggr\}\\
		& \quad - 3 \sum_{i \in \mathcal{J}} \sum_{j \in A_i} \sum_{k \in A_i} \sum_{l \in \mathcal{J}} \E \{ \bar X_{i,0} \bar X_{j,0} \bar X_{k,0} \bar X_{l,0} \} \\
		& \quad + \sum_{i \in \mathcal{J}} \sum_{j \in A_i} \sum_{k \in A_i} \sum_{ l \in A_i } \E \{ \bar X_{i,0} \bar X_{j,0} \bar X_{k,0} \bar X_{l,0} \} \\
		& \leq T_1 + T_2 ,  
\end{align*}
where 
\begin{align*}
	T_1 & = \frac{9}{2} \sum_{j \in \mathcal{J}} \E \biggl\{ |\bar X_{i,0} \bar Y_{i,0}| \Bigl( \sum_{k \in \mathcal{J}} \bar X_{k,0} \Bigr)^2 \biggr\}, \\
	T_2 & = \frac{5}{2} \sum_{i \in \mathcal{J}} \sum_{j \in A_i} \sum_{k \in A_i} \sum_{ l \in A_i } \E \{ \lvert \bar X_{i,0} \bar X_{j,0} \bar X_{k,0} \bar X_{l,0} \rvert \}. 
\end{align*}
By (LD2), we have 
\begin{align*}
	\begin{split}
		|T_1| 
		& \leq 9 \sum_{i \in \mathcal{J}} \E \biggl\{ |\bar X_{i,0} \bar Y_{i,0}| \Bigl( \sum_{k \in B_i^c} \bar X_{k,0} \Bigr)^2 \biggr\}+ 9  \sum_{i \in \mathcal{J}}  \E \biggl\{ |\bar X_{i,0} \bar Y_{i,0}| \Bigl( \sum_{k \in B_i} \bar X_{k,0} \Bigr)^2 \biggr\}\\
		& \coloneqq T_{11} + T_{12}. 
	\end{split}
\end{align*}
For $T_{11}$, noting that
$(\bar X_{i,0}, \bar Y_{i,0})$ is independent of $\{ \bar Y_{k,0} : j \in B_i^c \}$, then 
\begin{equ}
	T_{11} & = 9 \sum_{i \in \mathcal{J}}  \E \lvert \bar X_{i0} \bar Y_{j,0} \rvert \times  \E \biggl\{  \Bigl( \sum_{k \in B_i^c} \bar X_{k,0} \Bigr)^2 \biggr\}. 
    \label{eq-T11-1}
\end{equ}
Note that $|\bar X_{i,0}| \leq 2\sigma/\kappa$. By \cref{lem-1}, the second expectation of \cref{eq-T11-1} can be bounded by 
\begin{equ}
	\E \biggl\{  \Bigl( \sum_{k \in B_i^c} \bar X_{k,0} \Bigr)^2 \biggr\}
	& \leq 2 \E \biggl\{ \Bigl( \sum_{k \in \mathcal{J}} \bar X_{k,0} \Bigr)^2 \biggr\} + 2 \E \biggl\{ \Bigl( \sum_{k \in B_i} \bar X_{k,0} \Bigr)^2 \biggr\}\\
	& \leq 2 \E |\bar S|^2 + 8  \sigma^2\leq 10.04\sigma^2.
    \label{eq-T11-2}
\end{equ}
Recalling that $\theta = \sum_{i \in \mathcal{J}}\sum_{j \in A_i} \E \lvert \bar X_i \bar X_j/\sigma^2 \rvert$, and noting that $\lvert \bar Y_i \rvert \leq 1$, we have the first expectation of \cref{eq-T11-1} is bounded by 
\begin{equ}
	\sum_{i \in \mathcal{J}}  \E |\bar X_{i,0} \bar Y_{i,0}|
	& \leq \sum_{i \in \mathcal{J}} (\E |X_i Y_i| + 3 \kappa| \E \bar X_i| ) \\
	& \leq \theta \sigma^2 + 3\sigma^2 \kappa \beta_2 \leq (\theta + 0.02) \sigma^2, 
    \label{eq-xybound}
\end{equ}
where we used \cref{eq-beta22}  and the assumption that $\beta_2 \leq 1/(150\kappa)$ in the last line. 
Substituting \cref{eq-T11-2,eq-xybound} to \cref{eq-T11-1} gives 
\begin{equ}
	T_{11} & \leq 99(\theta + 1)\sigma^4. 
    \label{eq-T1}
\end{equ}
For $T_{12}$, as 
$\sum_{k \in B_i}|\bar X_{k,0}| \leq 2 \sigma$, we have $(\sum_{k \in B_i}|\bar X_{k,0}|)^2 \leq 4 \sigma^2$ almost surely, and thus
\begin{equ}
	T_{12}  
		& \leq 36 \sum_{i \in \mathcal{J}} \E |\bar X_{i,0} \bar Y_{i,0}| \leq 36 (\theta + 1) \sigma^4 . 
    \label{eq-T2}
\end{equ}
By \cref{eq-xybound} and noting that $(\sum_{k \in A_i} \bar X_{k,0})^2 \leq 4 \sigma^2$ almost surely,  we have  
\begin{equ}
	% |T_2| \leq 12 \kappa \kappa  (\theta + 1) \sigma^4 / \tau^2, \quad 
	|T_2| \leq 10 (\theta + 1)\sigma^4 . 
    \label{eq-T34}
\end{equ}
By \cref{eq-Smean,eq-T1,eq-T2,eq-T34}, and recalling that $\beta_2 \leq 1/(150 \kappa^2)$ and $\kappa \geq 1$, we have 
	\begin{equ}
		\E |\bar S|^4 
		& \leq 8 \bigl( \E \lvert \bar S - \E \{ \bar S \} \rvert^4 + \lvert \E \bar S \rvert^4 \bigr)\\
		& \leq 8 \bigl( 145 (\theta + 1) \sigma^4 + \beta_2^4 \sigma^4\bigr) \\
		& \leq 1161  (\theta + 1) \sigma^4. 
        \label{eq-ES41}
	\end{equ}
	% If (LD2$'$) holds, then we have 
	% \begin{align*}
	% 	T_1 
	% 	& \leq 9 \sum_{i \in \mathcal{J}} \sum_{j \in A_i} \E \biggl\{ \lvert \bar X_{i,0} \bar X_{j,0} \rvert \biggl( \sum_{k \in A_{ij}^c} \bar X_{k,0} \biggr)^2 \biggr\} \\
	% 	& \quad + 9 \sum_{i \in \mathcal{J}} \sum_{j \in A_i} \E \biggl\{ \lvert \bar X_{i,0} \bar X_{j,0} \rvert \biggl( \sum_{k \in A_{ij}} \bar X_{k,0} \biggr)^2 \biggr\}\\
	% 	& \coloneqq T_{13} + T_{14}. 
	% \end{align*}
	% By \cref{lem-1}, we have 
	% \begin{align*}
	% 	\E \biggl\{ \biggl( \sum_{k \in A_{ij}^c} \bar X_{k,0} \biggr)^2 \biggr\}
	% 	& \leq 2 \E \biggl(\sum_{k \in \mathcal{J}} \bar X_{k,0}\biggr)^2 + 2 \E \biggl( \sum_{k \in A_{ij}} \bar X_{k,0} \biggr)^2 \leq (8\kappa^2 / \tau^2 + 2.04) \sigma^2, 
	% \end{align*}
	% and we have 
	% \begin{align*}
	% 	\sum_{i \in \mathcal{J}} \E \lvert \bar X_{i,0} \bar X_{j,0} \rvert
	% 	& \leq (\theta' + 0.02 \kappa \beta_2) \sigma^2, 
	% \end{align*}
	% where we recalled that $\theta = \sum_{i \in \mathcal{J}} \sum_{j \in A_i} \E \lvert \bar X_i \bar X_j/\sigma^2 \rvert$. 
	% Then, we have under (LD2$'$), 
	% \begin{equ}
	% 	T_{13} \leq (72 \kappa^2 / \tau^2 + 19) (\theta' + 1) \sigma^4. 
        % \label{eq-T13}
	% \end{equ}
	% Similar to \cref{eq-T2}, we have 
	% \begin{equ}
	% 	T_{14} \leq 36 \kappa^2 (\theta + 1) \sigma^4 / \tau^2. 
        % \label{eq-T14}
	% \end{equ}
	% By \cref{eq-T13,eq-T14,eq-T34}, we have 
	% under (LD2$'$),
	% \begin{equ}
	% 	\E \lvert \bar S \rvert^4
	% 	& \leq 944 (\theta + 1) (\kappa^2 / \tau^2 + 1) \sigma^4. 
        % \label{eq-ES42}
	% \end{equ}
	This proves \cref{eq-l3.3-1}.

For the second inequality, observe that 
\begin{align*}
	\sum_{i \in \mathcal{J}} \bar Y_i
	& = \sum_{i \in \mathcal{J}} \sum_{j \in A_i} \bar X_j = \sum_{j \in \mathcal{J}} \lambda_j \bar X_j , 
\end{align*}
where 
\begin{math} 
	\lambda_j \coloneqq \lvert \{ i : j \in A_i \} \rvert. 
\end{math}
Then, we have $\lambda_j \leq \kappa$ for all $j \in \mathcal{J}$. 
Using a similar argument, we have 
\cref{eq-l3.3-2} holds. The inequality \cref{eq-l3.3-3} follows from \cref{eq-l3.3-1} and \cref{eq-l3.3-2} by applying Cauchy's inequality. 
\end{proof}

\subsection{Concentration inequalities}%
\label{seb:proof_of_proposition}
In this subsection, we prove a concentration inequality under (LD1) and (LD2). 
We denote by $C, C_1, C_2, \dots$ absolute constants. 
\newcounter{mycounter} 
% create a new counter, called 'mycounter'
% default def'n of '\themycounter' is '\arabic{mycounter}'
%% command to increment 'mycounter' by 1 and to display its value:
\setcounter{mycounter}{0}
\newcommand\showmycounter{\stepcounter{mycounter}\themycounter}
\newcommand\Const{\stepcounter{mycounter}C_{\arabic{mycounter}}}
\begin{proposition}
	\label{lem-con2}
	Assume that (LD1) and (LD2) holds and assume that $\beta_2 \leq 1/(150\kappa)$ and  $\beta_3 \leq 1/(150 \kappa^2)$. Let $\mathcal{F}_i = \sigma(X_j : j \in A_i)$. For any $z \in \R$ and ${\mathcal{F}_i}$-measurable
	random variables $a$ and $b$ such that $a \leq b$, we have 
	\begin{align*}
		\MoveEqLeft
		\P^{{\mathcal{F}_i}} \Bigl(z + \frac{a}{\bar V} \leq \bar W \leq z + \frac{b}{\bar V}\Bigr) \\
		& \leq \frac{2(b - a )}{\sigma} +  \Const   \kappa^2 \biggl( \beta_3 + \sum_{j \in N(B_i)} \bigl(\E^{{\mathcal{F}_i}} | \bar X_j/\sigma|^3 + \E^{{\mathcal{F}_i}} \lvert \bar S \bar X_j^3/\sigma^4 \rvert\bigr)\biggr).
		% \frac{2(b - a )}{\sigma} +  1150 \kappa^2(\kappa + 3) \beta_3 + 128 \kappa^3 \sum_{j \in N(B_i)} \E^{{\mathcal{F}_i}} \lvert \bar X_j / \sigma \rvert^3  \\
		% & \quad + 72 \kappa^2 \sum_{j \in N(B_i)} \E^{{\mathcal{F}_i}} \lvert \bar S \bar X_j^3 / \sigma^4 \rvert . 
	\end{align*}
	where $\P^{{\mathcal{F}_i}}$ and $\E^{\mathcal F_i}$ denote the conditional probability and conditional expectation given ${\mathcal{F}_i}$, respectively. 
\end{proposition}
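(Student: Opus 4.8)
The plan is to adapt the Stein-type concentration inequality of \cite{Che04b} to the present conditional, self-normalized setting, working throughout with the truncated quantities of \cref{eq-barS} and invoking the moment and tail bounds of \cref{lem-1,lem-4moment}. First, since $\bar V\ge\sigma/2$ the left-hand side is at most $1$, so we may assume $(b-a)/\sigma$ is below a fixed small constant; and since $\sigma/2\le\bar V\le\sqrt2\sigma$, multiplying the event through by $\bar V$ shows that $\{z+a/\bar V\le\bar W\le z+b/\bar V\}=\{a\le\bar U\le b\}$ with $\bar U:=\bar S-z\bar V$ and $a\le b$ still $\mathcal F_i$-measurable, so it suffices to bound $\P^{\mathcal F_i}(a\le\bar U\le b)$. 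A Markov bound based on $\E\bar S^2\le C\sigma^2$, $\E\bar S^4\le C(1+\theta)\sigma^4$ (\cref{lem-4moment}) and \cref{eq-l14} further lets us restrict to the good event $\mathcal G$ on which $\sum_i\bar X_i\bar Y_i$ is within $\sigma^2/2$ of $\bar\sigma^2$ --- so that by \cref{eq-l16} $\psi$ does not clip and $\sum_i\bar X_i\bar Y_i=\bar V^2\ge\sigma^2/4$ --- and on which $|z|$ and $\max(|a|,|b|)$ are at most a constant times $\sigma$; the complement of $\mathcal G$ has probability $O(\kappa^2\beta_3)$ and falls inside the asserted bound.

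Fix a small slack $\delta>0$ and let $f=f_{a,b,\delta}$ be bounded, $1$-Lipschitz and nondecreasing with $f'$ essentially equal to $\I_{[a,b]}$ up to the slack $\delta$ (whose effect will be absorbed into the cubic moment terms, as in the classical argument); given $\mathcal F_i$, $f$ is deterministic. We estimate
\[
 H:=\E^{\mathcal F_i}\bigl\{\bar S\,f(\bar U)\bigr\}=\sum_{j\in\mathcal J}\E^{\mathcal F_i}\bigl\{\bar X_j\,f(\bar U)\bigr\}
\]
from both sides. For the upper bound, write $\bar S=\bar S^{(i)}+\bar Y_i$ with $\bar S^{(i)}=\sum_{k\notin A_i}\bar X_k$ independent of $\mathcal F_i$ and $|\bar Y_i|\le\sigma$; then $\E^{\mathcal F_i}|\bar S|\le\E|\bar S^{(i)}|+\sigma\le C\sigma$ by \cref{eq-l16}, so $|H|\le\E^{\mathcal F_i}|\bar S|\cdot\lVert f\rVert\le C\sigma(b-a+\delta)$.

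For the lower bound, for each $j$ put $\bar S^{(j)}=\bar S-\bar Y_j=\sum_{k\notin A_j}\bar X_k$, let $\bar V^{(j)}=\psi\bigl(\sum_{k\in A_j^c}\sum_{l\in A_j^c\cap A_k}\bar X_k\bar X_l\bigr)$ as in \cref{lem2.2}, a function of $\{\bar X_k:k\notin A_j\}$ only, and set $\bar U^{(j)}=\bar S^{(j)}-z\bar V^{(j)}$; by (LD1), $\bar X_j$ is independent of $\bar U^{(j)}$, and for $j$ with $A_j\cap A_i=\emptyset$ jointly independent of $(\bar U^{(j)},\mathcal F_i)$, the remaining finitely many indices lying in $N(B_i)$. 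Expanding $f(\bar U)=f(\bar U^{(j)})+\int_0^{\bar U-\bar U^{(j)}}f'(\bar U^{(j)}+t)\,dt$ with $\bar U-\bar U^{(j)}=\bar Y_j-z(\bar V-\bar V^{(j)})$, the term $\E^{\mathcal F_i}\{\bar X_j\,f(\bar U^{(j)})\}$ factorises as $\E^{\mathcal F_i}\{\bar X_j\}\,\E^{\mathcal F_i}\{f(\bar U^{(j)})\}$ when $A_j\cap A_i=\emptyset$, contributing in total at most $\lVert f\rVert\sum_j|\E\bar X_j|\le\lVert f\rVert\,\kappa\sigma\beta_2$ by \cref{eq-beta22}, and is crudely bounded by $\lVert f\rVert\,\E^{\mathcal F_i}|\bar X_j|$ for $j\in N(B_i)$. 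In the integral we split $\bar U-\bar U^{(j)}$ into its numerator part $\bar Y_j$ and its denominator part $-z(\bar V-\bar V^{(j)})$. The numerator part is handled exactly as in the Chen--Shao local-dependence concentration bound (including the sign/kernel analysis): its total contribution equals, up to errors of the form $C\sigma^2\kappa^2(\beta_3+\sum_{j\in N(B_i)}\E^{\mathcal F_i}|\bar X_j/\sigma|^3)$ --- the globally summable part controlled through \cref{eq-l13}, the local part through the neighbourhood structure --- the quantity $\E^{\mathcal F_i}\{\I(a\le\bar U\le b)\sum_j\bar X_j\bar Y_j\}$, which on $\mathcal G$ is at least $\tfrac14\sigma^2\,\P^{\mathcal F_i}(a\le\bar U\le b)-C\sigma^2\kappa^2\beta_3$ because there $\sum_j\bar X_j\bar Y_j=\bar V^2\ge\sigma^2/4$. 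The denominator part is the delicate one: bounding its increment by $|f(\bar U)-f(\bar S-z\bar V^{(j)})|$, noting it vanishes unless $\bar S-z\bar V^{(j)}+t$ meets $[a,b]$ and hence then forces $|z|\,\bar V^{(j)}\le|\bar S|+C\sigma$ there, and estimating $|\bar V-\bar V^{(j)}|$ via \cref{eq-VVi} by a sum of products $|\bar X_k\bar X_l|$ with $k$ or $l$ in $A_j$, one finds after summing over $j$ and an AM--GM rearrangement of the resulting cubic sums that this part contributes at most $C\sigma^2\kappa^2(\beta_3+\sum_{j\in N(B_i)}\E^{\mathcal F_i}|\bar S\bar X_j^3/\sigma^4|)$. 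Altogether
\[
 H\ge\tfrac14\sigma^2\,\P^{\mathcal F_i}(a\le\bar U\le b)-C\sigma^2\kappa^2\Bigl(\beta_3+\sum_{j\in N(B_i)}\bigl(\E^{\mathcal F_i}|\bar X_j/\sigma|^3+\E^{\mathcal F_i}|\bar S\bar X_j^3/\sigma^4|\bigr)\Bigr)-C\sigma\lVert f\rVert\,\kappa\beta_2 .
\]

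Comparing the lower bound with the upper bound, using $\kappa\beta_2\le1/150$ to absorb the $\lVert f\rVert\,\kappa\beta_2$ term into the width term, choosing $\delta$ of the order of the cubic moment quantities, and tracking constants, one obtains
\[
 \P^{\mathcal F_i}(a\le\bar U\le b)\le\frac{2(b-a)}{\sigma}+C\kappa^2\Bigl(\beta_3+\sum_{j\in N(B_i)}\bigl(\E^{\mathcal F_i}|\bar X_j/\sigma|^3+\E^{\mathcal F_i}|\bar S\bar X_j^3/\sigma^4|\bigr)\Bigr) ,
\]
which, by the first-step reduction, is the assertion. The main obstacle is the self-normalization: replacing $\bar V$ by the $\bar X_j$-independent surrogate $\bar V^{(j)}$ forces the term $z(\bar V-\bar V^{(j)})$, whose naive estimate carries the unbounded factor $|z|$; it is tamed only by observing that on the support of $f'$ one has $|z|\lesssim|\bar S|/\sigma$, and it is precisely this that produces the mixed cubic moment $\E^{\mathcal F_i}|\bar S\bar X_j^3/\sigma^4|$ in the error. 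The remaining work is the lengthy but routine combinatorial bookkeeping that separates the globally summable $\beta_3$ contribution from the $\mathcal F_i$-local sum over $N(B_i)$, the Chen--Shao sign/kernel analysis that makes the leading term equal to $\E^{\mathcal F_i}\{\I(\cdot)\sum_j\bar X_j\bar Y_j\}$, and the verification of the explicit constant $2$.
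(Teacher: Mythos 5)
Your overall architecture (Stein-type concentration via a leave-one-out expansion of a Lipschitz test function, with $\bar V$ replaced by the surrogate $\bar V^{(j)}$ and the error split into a globally summable $\beta_3$ part and an $N(B_i)$-local part) is in the right family, but the specific route you take — linearizing the event to $\{a\le \bar U\le b\}$ with $\bar U=\bar S-z\bar V$ and running the argument on $\bar U$ — has a gap that I do not see how to repair within your scheme. The problem is your ``good event'' $\mathcal G$. You need $|z|$ and $\max(|a|,|b|)$ to be $O(\sigma)$ in order to tame the perturbation $\bar U-\bar U^{(j)}=\bar Y_j-z(\bar V-\bar V^{(j)})$, because your bound on the support of $f'$ reads $|z|\,\bar V^{(j)}\le|\bar S^{(j)}|+\max(|a|,|b|)+\delta+|t|$, and the $\max(|a|,|b|)$ term is otherwise unbounded. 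But the claim that the complement of $\mathcal G$ has probability $O(\kappa^2\beta_3)$ is false: restricting to $\max(|a|,|b|)\le M\sigma$ amounts to discarding the event $|\bar U|>(M-1/2)\sigma$, i.e.\ roughly $|\bar W-z|>cM$, and Chebyshev (or the fourth-moment bound of \cref{lem-4moment}) only gives $O(M^{-2})$ or $O((1+\theta)M^{-4})$ for this — a fixed constant, not $o(1)$, and certainly not $O(\kappa^2\beta_3)$ (for i.i.d.\ bounded summands $\P(|\bar W|>M)\to 2\Phi(-M)>0$ while $\beta_3\asymp n^{-1/2}\to0$; also $\theta$ does not appear in the asserted bound). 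Since the statement must hold for \emph{all} $z$ and all $\mathcal F_i$-measurable $a\le b$, this reduction cannot be waved through, and without it the factor $|z|$ in the denominator part is genuinely uncontrolled.

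The paper's proof avoids this issue entirely by never linearizing: it keeps the test function in the form $f_{a,b,\bar V}(\bar W)$ of \cref{eq-2.1}, so that the leave-one-out increment of the argument is $\eta_j=\bar Y_j/\bar V$ — a pure numerator shift in which $z$ never appears. The surrogate $\bar V^{(j)}$ is introduced only in the zeroth-order term $Q_4$, where one compares $\bar V^{-1}f((\bar S-\bar Y_j)/\bar V)$ with $(\bar V^{(j)})^{-1}f((\bar S-\bar Y_j)/\bar V^{(j)})$ using only $\lVert f\rVert$ and the Lipschitz bound $|f(s/v)-f(s/v')|\le|s|\,|1/v-1/v'|$; that is where the mixed moment $\E^{\mathcal F_i}|\bar S\bar X_j^3/\sigma^4|$ actually comes from (via \cref{eq-VVi,eq-EFS}), not from any bound on $z$. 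A secondary point: your treatment of the smoothing slack $\delta$ hides the recursive step. The term corresponding to $f'(\bar U^{(j)}+t)-f'(\bar U)$ (the paper's $Q_2$ in \cref{eq-Q2}) produces the concentration function $L(\alpha)$ at scale $\alpha=20\kappa^2\sigma\beta_3$, which must be self-bounded via \cref{eq-aa11}--\cref{eq-aa2} before the final inequality closes; ``choosing $\delta$ of the order of the cubic moment quantities'' does not by itself dispose of this term. If you rewrite your argument with the paper's choice of test function (event kept as $z+a/\bar V\le\bar W\le z+b/\bar V$, increments taken in the numerator only), both difficulties disappear.
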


\begin{proof}
	[Proof of \cref{lem-con2}]
	We use the idea of Proposition 3.2 in \cite{Che04b} to prove this proposition. 
	Let $\alpha = 20\kappa^2 \sigma \beta_3$. Without loss of generality, we assume that $b - a \leq \sigma/2$, otherwise the result is trivial. As $\kappa^3 \geq 1$ and by assumption we have $\kappa^2\beta_3 \leq 1/150$. Therefore, we have 
	\begin{equ}
		b - a + \alpha \leq 0.8 \sigma. 
        \label{eq-babound}
	\end{equ}
	Let 
	\begin{equation}
		\label{eq-2.1}
		f_{a,b,v}(w) =  
		\begin{dcases}
			- \frac{b - a + \alpha}{2v} & \text{if $w \leq z + a/v - \alpha/v$}, \\
			\frac{1}{2\alpha} \Bigl(w - \frac{\alpha + \alpha}{v}\Bigr)^2 - \frac{b - a + \alpha}{2v} & \text{if $z + \frac{a - \alpha}{v} < w \leq z + \frac{a}{v}$}, \\
			w - \frac{a + b}{2v} & \text{if $z + a/v < w \leq z + b/v$}, \\
			- \frac{1}{2\alpha} \Bigl(w - \frac{b - \alpha}{v}\Bigr)^2 + \frac{b - a + \alpha}{2v} & \text{if $z + \frac{b}{v} < w \leq z + \frac{b + \alpha}{v}$},\\
			\frac{b - a + \alpha}{2v} & \text{if $w > z + (b + \alpha)/v$}.
		\end{dcases}
	\end{equation}
	Then, we have 
	\begin{align}
		\label{eq-fFv}
		f_{a,b,v}'(w) = 
		\begin{dcases}
			1 & \text{if $z + a/v \leq w \leq z + b/v$}, \\
			0 & \text{if $w < z + a/v - \alpha/v$ or $w > z + b/v	 + \alpha/v$}, \\
			\text{linear} & \text{otherwise}. 
		\end{dcases}
	\end{align}
	Let $f:= f_{a,b, \bar V}$.
	Then, it follows from $\bar V \geq \sigma / 2$ that $\lVert f \rVert \leq (b - a + \alpha)/(2\bar V) \leq (b - a + \alpha)/\sigma$.
	Fix $i$. 
Recall that $N(A_i) = \{j: A_i \cap
A_j \neq \emptyset\}$ and $N(B_i) = \{ j: B_i \cap A_j \neq \emptyset \}$.
	Let 
	\begin{math}
		\sigma_i = ( \E \{ \sum_{j \in N(B_i)^c} \bar X_j \}^2 )^{1/2}.
	\end{math}
	Then, recalling that $\beta_3 \leq 1/(150 \kappa^2)$ and by H\"older's inequality, we have 
	\begin{align*}
		\sigma_i & \leq \biggl( \lvert N(B_i) \rvert^{2} \sum_{j \in N(B_i)} \E \lvert \bar X_j \rvert^3 \biggr)^{1/3}\leq \bigl( \kappa^2 \sigma^3 \beta_3 \bigr)^{1/3} \leq 0.2 \sigma. 
	\end{align*}
	On one hand, as $\|f\| \leq (b - a + \alpha)/\sigma$, $\bar V \geq \sigma/2$, and $\{ X_j : j \not\in N(B_i) \}$ is independent of ${\mathcal{F}_i}$, we have 
	\begin{equ}
		\biggl\lvert \E^{{\mathcal{F}_i}} \biggl\{ \biggl( \sum_{j \not\in N(B_i)} \xi_j \biggr) f(\bar W)  \biggr\}
		\biggr\rvert		& \leq 
		\frac{2(b - a + \alpha)	}{\sigma^2} \E \biggl\lvert \sum_{j \not\in N(B_i)} \bar X_i \biggr\rvert \\
							& \leq \frac{2\sigma_i}{\sigma^2} (b - a + \alpha) \leq \frac{b - a + \alpha}{2\sigma}.
        \label{eq-upper1}
	\end{equ}
	Let 
	\begin{align*}
		\hat M_j (t) & = \bar X_j \bigl( \I( - \bar Y_j \leq t \leq 0 ) - \I(0 < t \leq - \bar Y_j)\bigr), & M_j(t) & = \E \{ \hat M_j(t) \}, \\
		\hat M(t) & = \sum_{j \in N(B_i)^c} \hat M_j(t), \quad  & M(t) & = \E \{ \hat M(t) \}.
	\end{align*}
	For the lower bound of the left hand side of \cref{eq-upper1}, observe that 
	\begin{equ}
		\E^{{\mathcal{F}_i}} \biggl\{ \biggl( \sum_{j \not\in N(B_i)} \xi_j \biggr) f(\bar W)  \biggr\}
		& = \sum_{j \not\in N(B_i)} \E ^{{\mathcal{F}_i}}\bigl\{ \xi_j (f(\bar W) - f(\bar W - \eta_j) \bigr\} \\
		& \quad + \sum_{j \not\in N(B_i)} \E^{{\mathcal{F}_i}} \{ \xi_j f(\bar W -  \eta_j) \}\\
		& \coloneqq Q_1 + Q_2 + Q_3 + Q_4 ,  
        \label{eq-lower}
	\end{equ}
	where 
	\begin{align*}
		Q_1 & = \E^{{\mathcal{F}_i}} \biggl\{ \frac{f'(\bar W)}{\bar V^2} \int_{-\infty}^{\infty} {M}(t) dt \biggr\}, \\
		Q_2 & = \E^{{\mathcal{F}_i}} \biggl\{ \frac{1}{\bar V^2}\int_{-\infty}^{\infty} \biggl( f'\Bigl( \frac{\bar S + t}{\bar V} \Bigr) - f'\Bigl( \frac{\bar S}{\bar V} \Bigr) \biggr) M(t) dt \biggr\}, \\
		Q_3 & = \E^{{\mathcal{F}_i}} \biggl\{ \frac{1}{\bar V^2}\int_{-\infty}^{\infty} f'\Bigl( \frac{\bar S + t}{\bar V} \Bigr) \bigl( \hat M(t) - {M}(t) \bigr) dt \biggr\}, \\
		Q_4 & = \sum_{j \in N(B_i)^c} \E^{{\mathcal{F}_i}} \bigl\{ \xi_j f(\bar W - \eta_j) \bigr\}. 
	\end{align*}
	We now bound $Q_1$ to $Q_4$ one by one. 

	Observe that
	\begin{equ}
		\int_{-\infty}^{\infty} M(t)dt
		& = \sum_{j \in N(B_i)^c} \E \{ \bar X_j \bar Y_j \}= \sigma^2 - \sum_{j \in N(B_i)} \E \{ \bar X_j \bar Y_j \}.
        \label{eq-Mint}
	\end{equ}
	Now, by H\"older's inequality, the second term of the R.H.S. of \cref{eq-Mint} is bounded by 
	\begin{align*}
		\biggl\lvert \sum_{j \in N(B_i)} \E \{ \bar X_j \bar Y_j \} \biggr\rvert
		& \leq \biggl( \lvert N(B_i) \rvert^{1/2} \sum_{j \in N(B_i)} \E \lvert \bar X_j \bar Y_j \rvert^{3/2} \biggr)^{2/3}\\
		& \leq \biggl( \kappa^{1/2} \sum_{j \in N(B_i)} \biggl[ \frac{\kappa^{3/2}}{2} \E \lvert \bar X_j \rvert^3 + \frac{1}{2 \kappa^{3/2}} \E \lvert \bar Y_j \rvert^3 \biggr] \biggr)^{2/3}\\
		& \leq ( \kappa^2 \beta_3 )^{2/3} \sigma^2 \leq (1/150)^{2/3} \sigma^2 \leq 0.1 \sigma^2.
	\end{align*}
	Thus, we have 
	\begin{align*}
		\biggl\lvert \int_{-\infty}^{\infty} M(t) dt - \sigma^2 \biggr\rvert \leq 0.1 \sigma^2.
	\end{align*}
	Note that $\sigma^2/4 \leq \bar V^2 \leq 2\sigma^2$.  
	For $Q_1$, we have 
	\begin{equ}
		Q_1 & \geq  0.9 \E^{{\mathcal{F}_i}} \biggl\{\frac{\sigma^2}{\bar V^2}\I ( z + a/\bar V \leq \bar W \leq a + b/\bar V )\biggr\} \\
			& \geq 0.45 \P^{{\mathcal{F}_i}} \bigl(  z + a/\bar V \leq \bar W \leq a + b/\bar V  \bigr) .
        \label{eq-Q1}
	\end{equ}
	For $Q_3$, 
	\begin{align*}
		\lvert Q_3 \rvert & \leq Q_{31} + Q_{32}, 
	\end{align*}
	where 
	\begin{align*}
		Q_{31} & = \E^{{\mathcal{F}_i}} \int_{|t| \leq \sigma} \frac{1}{\bar V^2}f'\Bigl( \frac{\bar S + t}{\bar V} \Bigr)  \lvert \hat M(t) -  M(t) \rvert dt , \\
		Q_{32} & = \E^{{\mathcal{F}_i}} \int_{|t| > \sigma} \frac{1}{\bar V^2} \lvert \hat M(t) - M(t) \rvert dt. 
	\end{align*}
	For $Q_{31}$, noting that $\|f'\|\leq 1$, we have 
	\begin{align*}
		\frac{1}{v}\int_{|t| \leq \sigma} \biggl\lvert f'\biggl( \frac{s + t}{v} \biggr)  \biggr\rvert^2 dt 
		& \leq 
		\int_{|t| \leq \sigma/v} f'\biggl( \frac{s}{v} + t \biggr)   dt  \leq {b - a + \alpha}, 
	\end{align*}
	and thus, by Cauchy's inequality, 
	\begin{equ}
		Q_{31} & \leq \frac{1}{8} \E^{{\mathcal{F}_i}} \int_{|t| \leq \sigma} \biggl\lvert \frac{1}{\bar V^2} f'\biggl( \frac{\bar S + t}{\bar V} \biggr) \biggr\rvert^2 dt + 2 \E^{{\mathcal{F}_i}} \int_{|t| \leq \sigma} \frac{1}{\bar V^2}\lvert \hat M(t) -  M(t) \rvert^2 dt \\
			   & \leq \frac{b - a + \alpha}{4 \sigma} + \frac{8}{\sigma^2}  \int_{|t| \leq \sigma} \E\lvert \hat M(t) - M(t) \rvert^2 dt ,
        \label{eq-Q310}
	\end{equ}
	where we used the fact that $\bar V \geq \sigma/2$ and $\hat M(t)$ is independent of ${\mathcal{F}_i}$. 
	For the second term of \cref{eq-Q310}, 
	letting $(\bar X_j^*, \bar Y_j^*)$ be an independent copy of $(\bar X_j, \bar Y_j)$, we have 
	\begin{equ}
		\MoveEqLeft \frac{1}{\sigma^2}  \int_{|t| \leq \sigma} \E\lvert \hat M(t) - M(t) \rvert^2 dt\\
		& = \frac{1}{\sigma^2} \sum_{j \in N(B_i)^c} \sum_{l \in N(B_j) \cap N(B_i)^c} \int_{|t| \leq \sigma} \Cov \bigl\{ \hat M_j (t), \hat M_l(t)\bigr\} dt\\
		& = \frac{1}{\sigma^3} \sum_{j \in N(B_i)^c} \sum_{l \in N(B_j) \cap N(B_i)^c} \E \bigl\{ \bar X_j \bar X_l ( |\bar Y_j| \wedge \lvert \bar Y_l \rvert ) 1 (\bar Y_j \bar Y_l \geq 0)  \\
		& \hspace{5cm} - \bar X_j \bar X_l^* ( |\bar Y_j| \wedge \lvert \bar Y_l^* \rvert ) 1 (\bar Y_j \bar Y_l^* \geq 0)\bigr\} \\
		& \leq \frac{1}{\sigma^3}\sum_{j \in \mathcal{J}} \sum_{l \in N(B_i)} \E \bigl\{ \lvert \bar X_j \bar X_l \rvert ( \lvert \bar Y_j \rvert \wedge \lvert \bar Y_l \rvert ) + \lvert \bar X_j \bar X_l^* \rvert ( \lvert \bar Y_j \rvert \wedge \lvert \bar Y_l^* \rvert ) \bigr\}\leq 2 \kappa^2 \beta_3. 
        \label{eq-Q311}
	\end{equ}
	Substituting \cref{eq-Q311} to \cref{eq-Q310} yields  
	\begin{equ}
		Q_{31} & \leq \frac{0.25 (b - a + \alpha)}{\sigma} + 16 \kappa^2 \beta_3. 
        \label{eq-Q31}
	\end{equ}
	For $Q_{32}$, 
	noting that 
	\begin{equ}
		\E \int_{-\infty}^{\infty} |t \hat M(t)| dt 
		& \leq \frac{1}{2} \sum_{i \in \mathcal{J}} \E \{ |\bar X_i \bar Y_i^2| \}\leq \frac{\kappa^2}{2} \sum_{i \in \mathcal{J}} \E \lvert \bar X_i \rvert^3, 
        \label{eq-Mhat-bound}
	\end{equ}
	we have 
	\begin{equ}
		Q_{32} & \leq \frac{4}{\sigma^3}\E \int_{|t| > \sigma} |t| \lvert \hat M(t) - M (t)\rvert dt\leq \frac{8}{\sigma^3} \E \int_{-\infty}^{\infty} |t\hat M(t)|dt\leq 4 \kappa^2 \beta_3. 
        \label{eq-Q32}
	\end{equ}
	Combining \cref{eq-Q31,eq-Q32}, 
	we have 
	\begin{equ}
		\lvert Q_3 \rvert
		& \leq \frac{0.125(b - a + \alpha)}{ \sigma} + 20 \kappa^2 \beta_3. 
        \label{eq-Q3}
	\end{equ}
	Let $f_{a,b, \bar V^{(j)}}$ be defined as in \cref{eq-fFv} by taking $v = \bar V^{(j)}$.
	For $Q_{4}$, we have 
	\begin{align*}
		Q_4 
		& = \sum_{j \in N(B_i)^c} \E^{{\mathcal{F}_i}} \biggl\{ \bar X_j \biggl[ \frac{1}{\bar V} f \biggl( \frac{\bar S - \bar Y_j}{\bar V}\biggr) - \frac{ 1 }{\bar V^{(j)}}f \biggl( \frac{\bar S - \bar Y_j}{\bar V^{(j)}}  \biggr) \biggr] \biggr\}\\
		& \quad + \sum_{j \in N(B_i)^c} \E^{{\mathcal{F}_i}} \biggl\{ \frac{\bar X_j}{ \bar V^{(j)} } f \biggl( \frac{\bar S - \bar Y_j}{\bar V^{(j)}} \biggr) -  \frac{\bar X_j}{ \bar V^{(j)} } f_{a,b, \bar V^{(j)}} \biggl( \frac{\bar S - \bar Y_j}{\bar V^{(j)}} \biggr) \biggr\} \\
		& \quad + \sum_{j \in N(B_i)^c} \E^{{\mathcal{F}_i}} \biggl\{  \frac{\bar X_j}{ \bar V^{(j)} } f_{a,b, \bar V^{(j)}} \biggl( \frac{\bar S - \bar Y_j}{\bar V^{(j)}} \biggr) \biggr\}\\
		& := Q_{41} + Q_{42} + Q_{43}	. 
	\end{align*}
	For any $j \in N(B_i)^c$, by \cref{eq-VVi,eq-VVz}, we have 
\begin{equ}
	Q_{41} 
	& \leq  \sum_{j \in N(B_i)^c }\E^{{\mathcal{F}_i}}\biggl\lvert   \biggl\{ \frac{\bar X_j}{\bar V} f \Bigl( \frac{\bar S - \bar Y_j}{\bar V} \Bigr) \biggr\} - \biggl\{ \frac{\bar X_j}{\bar V^{(j)}} f \Bigl( \frac{\bar S - \bar Y_j}{\bar V^{(j)}} \Bigr)
	\biggr\}\biggr\rvert\\
	& \leq \sum_{j \in N(B_i)^c }\E^{{\mathcal{F}_i}} \biggl\{\biggl( \|f\| + \frac{2\lvert \bar S - \bar Y_j \rvert}{\sigma} \biggr) \lvert \bar X_j \rvert \biggl\lvert \frac{1}{\bar V} - \frac{1}{\bar V^{(j)}} \biggr\rvert\biggr\}\\
	& \leq \frac{4}{\sigma^4} \sum_{j \in N(B_i)^c }\E^{{\mathcal{F}_i}} \biggl\{ \bigl( (b - a + \alpha) + {2\lvert \bar S - \bar Y_j \rvert} \bigr) \lvert \bar X_j \rvert \sum_{l \in N(A_j)} \lvert \bar X_l \bar Y_l \rvert \biggr\}\\
	% & \leq \frac{4}{3 \sigma^3} \E^{{\mathcal{F}_i}} \sum_{j \in N(B_i)^c }\sum_{l \in N(A_j)} \sum_{k \in A_l} \biggl( (b - a + \alpha) +{2\lvert \bar S - \bar Y_j \rvert} \biggr) ( \lvert \bar X_j\rvert^3 + \lvert \bar X_l \rvert^3 + \lvert \bar X_k \rvert^3 ) \\
	& \leq \frac{4  (b - a + \alpha) }{3 \sigma^4}Q_{44} + \frac{8}{3 \sigma^4} Q_{45},  
        \label{eq-EF1}
	\end{equ}
	where 
	\begin{align*}
		Q_{44} & =  \sum_{j \in N(B_i)^c }\sum_{l \in N(A_j)} \sum_{k \in A_l} \E^{{\mathcal{F}_i}}\{ ( \lvert \bar X_j\rvert^3 + \lvert \bar X_l \rvert^3 + \lvert \bar X_k \rvert^3 )\}, \\
		Q_{45} & = \sum_{j \in N(B_i)^c }\sum_{l \in N(A_j)} \sum_{k \in A_l} \E^{{\mathcal{F}_i}} \bigl\{\bigl( {\lvert \bar S - \bar Y_j \rvert} \bigr) ( \lvert \bar X_j\rvert^3 + \lvert \bar X_l \rvert^3 + \lvert \bar X_k \rvert^3 )
	\bigr\}	. 
	\end{align*}
	% where in the last inequality, we use the fact that $\lvert \bar S - \bar S^{(B_i)} \rvert \leq \kappa$. 
	Noting that $ \lvert N(B_i) \rvert \leq \kappa $, $\lvert N(A_i)\rvert \leq \kappa$, $\lvert \{ j: A_j \cap A_l \neq \emptyset \} \rvert \leq \kappa$ and $\lvert A_l \rvert \leq \kappa$, we have
	\begin{align}
		Q_{44}
		& \leq  \sum_{j \in N(B_i)^c} \sum_{l \in N(A_j)} \sum_{k \in A_l}\E^{{\mathcal{F}_i}} \lvert \bar X_j \rvert^3 
		+ \sum_{k \in N(B_i)^c} \sum_{l : k \in A_l} \sum_{j : A_j \cap A_l \neq \emptyset} \E^{{\mathcal{F}_i}} \lvert \bar X_k \rvert^3 \\
		& \quad + \sum_{l \in N(B_i)^c} \sum_{j: A_j \cap A_l \neq \emptyset} \sum_{k \in A_l} \E^{{\mathcal{F}_i}} \lvert \bar X_l \rvert^3  + \sum_{k \in N(B_i)} \sum_{l : k \in A_l} \sum_{j : A_j \cap A_l \neq \emptyset} \E^{{\mathcal{F}_i}} \lvert \bar X_k \rvert^3 \\
		& \quad + \sum_{l \in N(B_i)} \sum_{j: A_j \cap A_l \neq \emptyset} \sum_{k \in A_l} \E^{{\mathcal{F}_i}} \lvert \bar X_l \rvert^3 \\
		& \leq 3 \kappa^2 \sigma^3 \beta_3 + 2 \kappa^2 \sum_{j \in N(B_i)} \E^{{\mathcal{F}_i}} \lvert \bar X_j \rvert^3 , 
        \label{eq-EF0}
	\end{align}
	where we used the fact that $\E^{{\mathcal{F}_i}}|\bar X_j|^3 = \E \lvert \bar X_j \rvert^3$ for $j \in N(B_i)^c$.

Now, we bound $Q_{45}$.
	For any $l$, 
	let
	\begin{math}
		\bar S_l = \bar S - \sum_{k \in N(B_i) \cup A_j \cup A_l} \bar X_k, 
	\end{math}
	then $\bigl\lvert \lvert \bar S - \bar Y_j \rvert - \lvert \bar S_l \rvert \bigr\rvert \leq 2 \sigma$. Also, if $l \in N(B_i)^c$, we have $\bar S_l$ is independent of $\bar X_l$ and $(\bar S_l, \bar X_l)$ is independent of ${\mathcal{F}_i}$. 
	Noting that $\bar \sigma^2 \leq 1.02\sigma^2$ by \cref{eq-l16} and that $\kappa \geq 1$, we have 
	\begin{align*}
		\E \lvert \bar S_l \rvert 
		& \leq \E \lvert \bar S \rvert + 3 \kappa \sigma \leq \bar \sigma + 3 \sigma
		 \leq 4.02 \sigma. 
	\end{align*}
	Therefore, for $l \in N(B_i)^c$, 
	\begin{equ}
		\E^{{\mathcal{F}_i}} \lvert (\bar S - \bar Y_j) \bar X_l^3 \rvert
		& \leq \E \lvert \bar S_l \bar X_l^3 \rvert + 2  \sigma \E \lvert \bar X_l \rvert^3 \\
		& \leq \E \lvert \bar S_l \rvert \E \lvert \bar X_l \rvert^3 + 2 \sigma \E \lvert \bar X_l \rvert^3 \leq 6.02 \sigma \E \lvert \bar X_l \rvert^3. 
        \label{eq-linNB}
	\end{equ}
	For $l \in N(B_i)$, we have 
	\begin{equ}
		\E^{{\mathcal{F}_i}} \lvert (\bar S - \bar Y_j) \bar X_l^3 \rvert
		& \leq \sigma \E^{{\mathcal{F}_i}}\lvert \bar X_l \rvert^3 + \E^{{\mathcal{F}_i}}\lvert \bar S \bar X_l^3 \rvert.
        \label{lninNb}
	\end{equ}
	Hence, by \cref{eq-linNB,lninNb}, 
	we then obtain 
	\begin{equ}
		Q_{45}
		& \leq \E^{{\mathcal{F}_i}}\sum_{j \in N(B_i)^c} \sum_{l \in N(A_j)} \sum_{k \in A_l} { \lvert \bar S - \bar Y_j \rvert}  \lvert \bar X_j \rvert^3 \\
		& \quad + \E^{{\mathcal{F}_i}}\sum_{k \in N(B_i)^c}  \sum_{l: k \in A_l}\sum_{j :  A_j \cap A_l \neq \emptyset }  {\lvert \bar S - \bar Y_j \rvert} \lvert \bar X_k \rvert^3\\
		& \quad + \E^{{\mathcal{F}_i}}\sum_{l \in N(B_i)^c} \sum_{j : A_j \cap A_l \neq \emptyset } \sum_{k \in A_l} {\lvert \bar S - \bar Y_j \rvert} \lvert \bar X_l \rvert^3\\
		& \quad + \E^{{\mathcal{F}_i}}\sum_{k \in N(B_i)}  \sum_{l: k \in A_l}\sum_{j :  A_j \cap A_l \neq \emptyset }  {\lvert \bar S - \bar Y_j \rvert}\lvert \bar X_k \rvert^3\\
		& \quad + \E^{{\mathcal{F}_i}}\sum_{l \in N(B_i)} \sum_{j : A_j \cap A_l \neq \emptyset } \sum_{k \in A_l} {\lvert \bar S - \bar Y_j \rvert} \lvert \bar X_l \rvert^3\\
		& \leq \Const  \sigma \kappa^2 \biggl( \sum_{j \in \mathcal{J}} \E \lvert \bar X_j \rvert^3 + \sum_{j \in N(B_i)} \E^{{\mathcal{F}_i}} | \bar X_j|^3\biggr)  + \Const \kappa^2 \sum_{j \in N(B_i) } \E^{{\mathcal{F}_i}} \lvert \bar S \bar X_j^3 \rvert  .
        \label{eq-EFS}
	\end{equ}
	By \cref{eq-EF0,eq-EFS}, we have 
	\begin{equation}
		Q_{41} 
		\leq 
		 \Const  \kappa^2 \biggl( \beta_3 + \sum_{j \in N(B_i)} \E^{{\mathcal{F}_i}} | \bar X_j/\sigma|^3 + \sum_{j \in N(B_i) } \E^{{\mathcal{F}_i}} \lvert \bar S \bar X_j^3/\sigma^4 \rvert\biggr) .
        \label{eq-Q41}
	\end{equation}
	For $Q_{42}$, observing that 
	\begin{align*}
		\bigl\lvert f (w) - f_{a,b, \bar V^{(j)}}  \bigr\rvert \leq \biggl\lvert \frac{1}{\bar V} - \frac{1}{\bar V^{(j)}} \biggr\rvert, 
	\end{align*}
	and by \cref{eq-VVi,eq-EF0},
	we obtain 
	\begin{equ}
		Q_{42} & \leq \frac{2}{\sigma}\sum_{j \in N(B_i)^c} \E \biggl\{ \lvert \bar X_j \rvert \biggl\lvert \frac{1}{\bar V} - \frac{1}{\bar V^{(j)}} \biggr\rvert \biggr\} \\
			   & \leq \frac{8}{\sigma^3} \sum_{j \in N(B_i)^c} \sum_{k \in N(A_j)} \E^{{\mathcal{F}_i}} \bigl\{ \lvert \bar X_j  \bar X_k \bar Y_k \rvert \bigr\} \\
			   & \leq \frac{8}{3 \sigma^3}  \E^{{\mathcal{F}_i} } \sum_{j \in N(B_i)^c} \sum_{l \in N(A_j)} \sum_{k \in A_l}%
                      \biggl(  \lvert \bar{X}_j \rvert^3 + \lvert \bar{X}_k \rvert^3 + \lvert \bar{X}_l \rvert^3  \biggr) \\
			   & \leq 8 \kappa^2 \beta_3 + 8 \kappa^2 \sum_{j \in N(B_i)} \E^{{\mathcal{F}_i}} \lvert \bar X_j/ \sigma\rvert^3. 
        \label{eq-Q42}
	\end{equ}

	For $Q_{43}$, as $\beta_2 \leq 1/(150\kappa) \leq 1/ 150$, $j \in N(B_i)^c$, $V^{(j)} \geq 0.5 \sigma$ and $\|f_{a,b, \bar V^{(j)}}\| \leq (b - a + \alpha)/\sigma$, by \cref{eq-beta22}, we have 
	\begin{equ}
		\lvert Q_{43} \rvert
		& \leq \frac{2 \|f_{a,b, \bar V^{(j)}}\|}{\sigma} \sum_{j \in N(B_i)^c} \lvert \E^{{\mathcal{F}_i}}  \bar X_j  \rvert = \frac{2}{\sigma^2} (b - a + \alpha) \sum_{j \in N(B_i)^c} \lvert \E  \bar X_j  \rvert \\
		& \leq 2(b - a + \alpha)\beta_2/\sigma \leq 0.2 (b - a + \alpha)/\sigma. %\leq \frac{b - a + \alpha}{8}.
        \label{eq-Q43}
	\end{equ}
	By \cref{eq-Q41,eq-Q42,eq-Q43}, we have 
	\begin{equ}
		\lvert Q_4 \rvert & \leq 0.2 (b - a + \alpha)/\sigma  \\
						  & \quad + \Const  \kappa^2 \biggl( \beta_3 + \sum_{j \in N(B_i)} \E^{{\mathcal{F}_i}} | \bar X_j/\sigma|^3 + \sum_{j \in N(B_i) } \E^{{\mathcal{F}_i}} \lvert \bar S \bar X_j^3/\sigma^4 \rvert\biggr) .
		\label{eq-Q4}
	\end{equ}

	To bound $Q_2$, we note that 
	\begin{align*}
		 \MoveEqLeft \biggl( f'\Bigl( \frac{\bar{S} + t}{\bar{V}} \Bigr) - f'\Bigl( \frac{\bar{S}}{\bar{V}} \Bigr) \biggr)\\
		& = 
		\begin{cases}
			\frac{1}{\alpha} \int_0^t (\I\{ z + a/\bar{V} - \alpha/\bar{V} \leq (\bar{S} + s)/\bar{V} \leq z +  a / \bar{V} \} \\
			\hspace{1cm} - \I\{ z + b/\bar{V} \leq (\bar{S} + s)/ \bar{V}  \leq z + b/\bar{V} + \alpha/\bar{V} \}) ds & \text{if $t \geq 0$},\\
			- \frac{1}{\alpha} \int_{t}^0 (\I\{ z + a/\bar{V} - \alpha/\bar{V} \leq (\bar{S} + s)/\bar{V} \leq z +  a / \bar{V} \} \\
			\hspace{1cm} - \I\{ z + b/\bar{V} \leq (\bar{S} + s)/ \bar{V}  \leq z + b/\bar{V} + \alpha/\bar{V} \}) ds & \text{if $t < 0$}.
		\end{cases}
	\end{align*}
	Then, recalling that $\alpha = 20 \kappa^2\sigma\beta_3$, we have 
	\begin{equ}
		|Q_2| 
		& \leq \frac{4}{\sigma^2}\int_{|t| \geq \sigma} M(t) dt + \frac{8}{\alpha \sigma^2} \int_0^\sigma \int_0^t L(\alpha) ds |M(t)| dt \\
		& \quad + \frac{8}{\alpha \sigma^2} \int_{-\sigma}^0 \int_{t}^0 L(\alpha) ds |M(t)| dt \\
		& \leq \frac{8}{\sigma^3} ( \sigma\alpha^{-1}L(\alpha) + 1 ) \int_{-\infty}^{\infty} |tM(t)| dt \\
		& \leq 4 \kappa^2 (\sigma\alpha^{-1} L(\alpha) + 1) \beta_3\\
		& \leq 0.2L(\alpha) + 4 \kappa^2 \beta_3, 
        \label{eq-Q2}
	\end{equ}
	where 
	\begin{align*}
		L(\alpha) = \lim_{n \to \infty} \sup_{y,x \in \mathbb{Q}}
		\P^{{\mathcal{F}_i}} ( y + (x - 1/n)/\bar V  \leq \bar W \leq y +  (x + 1/n)/\bar V +  \alpha/\bar V	+ 1/n  ), 
	\end{align*}
	the notation $\mathbb{Q}$ denotes the set of rational numbers, and we applied \cref{eq-Mhat-bound} and Jansen's inequality in the last line. 

	Combining \cref{eq-upper1,eq-lower,eq-Q1,eq-Q2,eq-Q3,eq-Q4}, we have 
	\begin{equ}
		\MoveEqLeft
		0.45 \P^{{\mathcal{F}_i}} \bigl( z + a/\bar V \leq \bar W \leq z + b / \bar V \bigr)\\
		& \leq 0.9 \frac{b - a + \alpha}{\sigma} + 0.2 L(\alpha)\\
		& \quad + \Const  \kappa^2 \biggl( \beta_3 + \sum_{j \in N(B_i)} \E^{{\mathcal{F}_i}} | \bar X_j/\sigma|^3 + \sum_{j \in N(B_i) } \E^{{\mathcal{F}_i}} \lvert \bar S \bar X_j^3/\sigma^4 \rvert\biggr) .
        \label{eq-aa1}
	\end{equ}
	% 83.5/0.45 = 185.6
	% 16.6/0.45 = 36.9
	% 5.4/0.45 = 12
	Rearranging \cref{eq-aa1} yields 
	\begin{equ}
		\MoveEqLeft
		\P^{{\mathcal{F}_i}} \bigl( z + a/\bar V \leq \bar W \leq z + b / \bar V \bigr)\\
		& \leq \frac{2(b - a + \alpha)}{\sigma} + 0.5 L(\alpha)\\
& \quad +   \Const  \kappa^2 \biggl( \beta_3 + \sum_{j \in N(B_i)} \E^{{\mathcal{F}_i}} | \bar X_j/\sigma|^3 + \sum_{j \in N(B_i) } \E^{{\mathcal{F}_i}} \lvert \bar S \bar X_j^3/\sigma^4 \rvert\biggr) .
\label{eq-aa11}
	\end{equ}
	Letting $a = x - 1/n$ and $b = x + 1/n + \alpha$, and $z = y$ in \cref{eq-aa1} and taking supremum over $y, x \in \mathbb{Q}$ and letting $n \to \infty$, we have 
	\begin{align*}
		L(\alpha) & \leq  
		0.5 L(\alpha) +  \Const  \kappa^2 \biggl( \beta_3 + \sum_{j \in N(B_i)} \E^{{\mathcal{F}_i}} | \bar X_j/\sigma|^3 + \sum_{j \in N(B_i) } \E^{{\mathcal{F}_i}} \lvert \bar S \bar X_j^3/\sigma^4 \rvert\biggr)  , 
	\end{align*}
	and hence, 
	\begin{equ}
		L(\alpha) & \leq \Const  \kappa^2 \biggl( \beta_3 + \sum_{j \in N(B_i)} \E^{{\mathcal{F}_i}} | \bar X_j/\sigma|^3 + \sum_{j \in N(B_i) } \E^{{\mathcal{F}_i}} \lvert \bar S \bar X_j^3/\sigma^4 \rvert\biggr) . 
        \label{eq-aa2}
	\end{equ}

	Substituting \cref{eq-aa2} to \cref{eq-aa11},  
	and recalling that $\alpha = 20 \kappa^2 \beta_3$, 
	we have 
	\begin{align*}
		\MoveEqLeft
		\P^{{\mathcal{F}_i}} \bigl( z + a/\bar V \leq \bar W \leq z + b / \bar V \bigr)\\
		% & \leq \frac{2(b - a + \alpha)}{\sigma}  \\
		% & \quad + 491.2 \kappa^2 \beta_3 + 128 \kappa^3 \sum_{j \in N(B_i)} \E^{{\mathcal{F}_i}} \lvert \bar X_j / \sigma \rvert^3 \\
		% & \quad + 24 \kappa^2 \sum_{j \in N(B_i)} \E^{{\mathcal{F}_i}} \lvert \bar S \bar X_j^3 / \sigma^4 \rvert \\
		& \leq \frac{2(b - a )}{\sigma} +   \Const  \kappa^2 \biggl( \beta_3 + \sum_{j \in N(B_i)} \E^{{\mathcal{F}_i}} | \bar X_j/\sigma|^3 + \sum_{j \in N(B_i) } \E^{{\mathcal{F}_i}} \lvert \bar S \bar X_j^3/\sigma^4 \rvert\biggr) . 
	\end{align*}
This completes the proof. 
\end{proof}

\section{Proof of Main results}%
\label{sub:proof_of_theorem_1_1}

In this subsection, let $\bar X_i, \bar Y_i, \bar S, \bar V$ and $\bar W$ be defined as in \cref{eq-barS}. 
We use a truncation argument to prove \cref{thm1}. Specifically, 
we first prove a Berry--Esseen bound for $\bar W$, and then prove an error bound for $\sup_{z \in \R} \lvert \P(W \leq z) - \P(\bar W \leq z) \rvert$. 
Again, we denote by $C, C_1, C_2, \dots$ absolute positive constants that may take different values in different places. 

Now, we give the following proposition, which provides a Berry--Esseen bound for $\bar W$, and the proof is based on Stein's method and the concentration inequality approach. 
\setcounter{mycounter}{0}
\begin{proposition}
	\label{pro-BEWbar}
	Under \textup{(LD1)} and \textup{(LD2)}. 
	We have 
	\begin{align*}
		\sup_{z \in \R} \bigl\lvert \P(\bar W \leq z) - \Phi(z) \bigr\rvert  \leq 
		C \bigl((\theta +1) \kappa^2 \beta_3 +  \kappa \beta_2\bigr).
	\end{align*}
\end{proposition}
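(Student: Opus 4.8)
The plan is to run Stein's method via the concentration-inequality approach directly on $\bar W=\bar S/\bar V$, perturbing by the neighbourhood sum $\bar S-\bar Y_i$ (together with $\bar V^{(i)}$) to exploit (LD1)--(LD2), and organising the computation so that the self-normalisation factor $\bar V^{-2}\sum_i\bar X_i\bar Y_i$ surfaces and absorbs the leading Stein term. First, if $\beta_2>1/(150\kappa)$ or $\beta_3>1/(150\kappa^2)$ the right-hand side exceeds an absolute constant and the bound is trivial, so I assume $\beta_2\leq 1/(150\kappa)$ and $\beta_3\leq 1/(150\kappa^2)$, which makes \cref{lem-1,lem2.2,lem-4moment,lem-con2} available and gives $\bar V\geq\sigma/2$ and $0.98\sigma^2\leq\bar\sigma^2\leq1.02\sigma^2$. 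Fix $z$ and let $f=f_z$ solve $f'(w)-wf(w)=\I(w\leq z)-\Phi(z)$; I will use the standard bounds $\|f\|\leq1$, $\|f'\|\leq1$, and $|f'(w+t)-f'(w)|\leq(|w|+1)|t|+|t|^2+\I(w\wedge(w+t)\leq z\leq w\vee(w+t))$. Then $\P(\bar W\leq z)-\Phi(z)=\E f'(\bar W)-\E\{\bar Wf(\bar W)\}$ and $\E\{\bar Wf(\bar W)\}=\sum_i\E\{\xi_if(\bar W)\}=R_1+R_2$ with $R_1=\sum_i\E\{\xi_if(\bar W-\eta_i)\}$ and $R_2=\sum_i\E\{\xi_i(f(\bar W)-f(\bar W-\eta_i))\}$; by \cref{lem2.2}, $|R_1|\leq 62\kappa^2\beta_3+2\kappa\beta_2$.

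Next I would write $f(\bar W)-f(\bar W-\eta_i)=\int_{\R}f'(\bar W+t)\hat K_i(t)\,dt$ with $\hat K_i$ supported on $|t|\leq|\eta_i|$, $\int\hat K_i=\eta_i$, $\int|\hat K_i|=|\eta_i|$, and split $R_2=R_{21}+R_{22}$ with $R_{21}=\E\{f'(\bar W)\,\bar V^{-2}\sum_i\bar X_i\bar Y_i\}$ and $R_{22}=\E\int\sum_i\xi_i(f'(\bar W+t)-f'(\bar W))\hat K_i(t)\,dt$. On the event $\{0.25\sigma^2\leq\sum_i\bar X_i\bar Y_i\leq2\sigma^2\}$ the factor in $R_{21}$ equals $1$, so $R_{21}=\E f'(\bar W)-\E\{f'(\bar W)(1-\bar V^{-2}\sum_i\bar X_i\bar Y_i)\}$; writing $D=\sum_i(\bar X_i\bar Y_i-\E\{\bar X_i\bar Y_i\})$, \cref{eq-l16} puts the complementary event inside $\{|D|\geq\sigma^2/2\}$, where $|1-\bar V^{-2}\sum_i\bar X_i\bar Y_i|\leq C(1+|D|/\sigma^2)$, and then $\I(|D|\geq\sigma^2/2)\leq 2|D|/\sigma^2$ together with \cref{eq-l13,eq-l14} give $\E|1-\bar V^{-2}\sum_i\bar X_i\bar Y_i|\leq C\E D^2/\sigma^4\leq C\kappa^2\beta_3$. (Using this restricted-moment bound rather than Cauchy--Schwarz is essential, since Cauchy--Schwarz would only yield $C\kappa\beta_3^{1/2}$.) Consequently $\P(\bar W\leq z)-\Phi(z)=-R_1-R_{22}+O(\kappa^2\beta_3)$, and it remains to bound $R_{22}$.

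For $R_{22}$, using the modulus-of-continuity bound for $f'$ and $|t|\leq|\eta_i|\leq2$ I would write $|R_{22}|\leq R_{22}'+R_{22}''$ with $R_{22}'\leq C\E\sum_i|\xi_i||\eta_i|^2(1+|\bar W|)$ (smooth part) and $R_{22}''\leq\E\sum_i|\xi_i||\eta_i|\,\I(|\bar W-z|\leq|\eta_i|)$ (jump part). For $R_{22}'$, Cauchy--Schwarz on $\bar Y_i=\sum_{j\in A_i}\bar X_j$ and AM--GM give $\sum_i|\xi_i||\eta_i|^2\leq C\kappa^2\sigma^{-3}\sum_i|\bar X_i|^3$ pathwise, while $|\bar W|\leq2|\bar S|/\sigma$ combined with $\bar S-\bar Y_i\indep\bar X_i$, $|\bar Y_i|\leq\sigma$ and $\E|\bar S|\leq C\sigma$ gives $\E[(1+|\bar W|)|\bar X_i|^3]\leq C\E|\bar X_i|^3$, so $R_{22}'\leq C\kappa^2\beta_3$. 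For $R_{22}''$, the point is that $\{|\bar W-z|\leq|\eta_i|\}=\{z-|\bar Y_i|/\bar V\leq\bar W\leq z+|\bar Y_i|/\bar V\}$ is exactly of the form treated by \cref{lem-con2} with $\mathcal{F}_i$-measurable endpoints $a=-|\bar Y_i|$, $b=|\bar Y_i|$, while $|\xi_i\eta_i|\leq 4|\bar X_i\bar Y_i|/\sigma^2$ is $\mathcal{F}_i$-measurable; hence $\P^{\mathcal{F}_i}(|\bar W-z|\leq|\eta_i|)$ is at most $4|\bar Y_i|/\sigma$ plus $C\kappa^2$ times $\beta_3+\sum_{j\in N(B_i)}(\E^{\mathcal{F}_i}|\bar X_j/\sigma|^3+\E^{\mathcal{F}_i}|\bar S\bar X_j^3/\sigma^4|)$. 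Multiplying by $4|\bar X_i\bar Y_i|/\sigma^2$, taking expectations and summing over $i$, the $4|\bar Y_i|/\sigma$ term gives $O(\kappa^2\beta_3)$ (as in $R_{22}'$); the $\beta_3$ term gives $O(\theta\kappa^2\beta_3)$ since $\sum_i\E|\bar X_i\bar Y_i|\leq\theta\sigma^2$; and in the sums over $j\in N(B_i)$ one separates $j\notin B_i$ (where $X_j$, and the part of $\bar S$ away from $B_i\cup A_j$, are independent of the $\mathcal{F}_i$-measurable pair $(\bar X_i,\bar Y_i)$, so $|\bar X_i\bar Y_i|$ again sums to $\theta\sigma^2$) from $j\in B_i$ (where $|\{i:j\in B_i\}|\leq\kappa$ and $|\bar X_i\bar Y_i|\leq\sigma^2/\kappa$), using also $|N(B_i)|\leq\kappa$, $|\{i:j\in A_i\}|\leq\kappa$ and the moment bounds of \cref{lem-1,lem-4moment}, to reach $R_{22}''\leq C(\theta+1)\kappa^2\beta_3$. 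Collecting the pieces and taking the supremum over $z$ gives the proposition.

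The step I expect to be the main obstacle is the last family of terms, namely those arising from $\sum_{j\in N(B_i)}\E^{\mathcal{F}_i}|\bar S\bar X_j^3/\sigma^4|$ after multiplication by $|\bar X_i\bar Y_i|$ and summation over $i$: a naive pointwise bound on $|\bar X_i\bar Y_i|$ costs an extra power of $\kappa$, while decoupling $|\bar X_i\bar Y_i|$ from the tail of $\bar S$ too bluntly costs an extra power of $\theta$. Obtaining exactly the linear factor $(\theta+1)$ requires extracting from $\bar S$ a block independent of both $\mathcal{F}_i$ and $X_j$ — for $j\notin B_i$ one uses that $\mathcal{F}_i=\sigma(X_k:k\in A_i)$ is independent of the pair $(X_j,\{X_k:k\notin B_i\cup A_j\})$ — and combining this with the second- and fourth-moment estimates of \cref{lem-1,lem-4moment} and the neighbourhood-size bounds, all uniformly in $z$. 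This bookkeeping, together with the restricted-moment estimate for the self-normalisation ratio in the $R_{21}$ step, is the delicate part of the argument.
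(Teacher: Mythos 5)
Your proposal follows essentially the same route as the paper's proof: Stein's equation with the leave-one-neighbourhood-out perturbation, \cref{lem2.2} for the linear term $\sum_i\E\{\xi_i f(\bar W-\eta_i)\}$, the variance estimates of \cref{lem-1} for $\E\lvert 1-\bar V^{-2}\sum_i\bar X_i\bar Y_i\rvert$, a smooth/jump split of the remainder, and \cref{lem-con2} applied with $a=-\lvert\bar Y_i\rvert$, $b=\lvert\bar Y_i\rvert$ for the concentration term — your $R_1,R_{21},R_{22}',R_{22}''$ correspond exactly to the paper's $R_4,R_1,R_2,R_3$. The bookkeeping you flag as delicate (the $\sum_{j\in N(B_i)}\E^{\mathcal F_i}\lvert\bar S\bar X_j^3/\sigma^4\rvert$ terms weighted by $\lvert\bar X_i\bar Y_i\rvert$) is handled in the paper in precisely the way you describe, so the argument goes through.
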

\begin{proof}
Without loss of generality, noting that $\kappa \geq 1$, we assume that $\beta_2 \leq 1/(150\kappa)$ and $\beta_3 \leq 1/(150\kappa^2)$, otherwise the inequality is trivial. 
Let $f_{z}$ be the solution to the Stein equation 
\begin{align*}
	f'(w) - w f(w) = \I(w \leq z) - \P(Z \leq z).
\end{align*}
In what follows, we simply write $f \coloneqq f_z$.
It can be shown that for all $w, w' \in \R$, 
\begin{equ}
	0 \leq f(w) \leq 1, \quad 
	\lvert f'(w) \rvert \leq 1, \quad 
	\lvert f'(w ) - f(w') \rvert \leq 1 ,
    \label{eq-pro1}
\end{equ}
and for all $w$ and $t$, 
\begin{equ}
	\lvert f'(w + t) - f'(w) \rvert
	& \leq ( |w| + 1 ) |t| + 1( z-|t| \leq w \leq z + |t| ).
    \label{eq-pro2}
\end{equ}

Note that by the Stein equation, we have 
\begin{align*}
	\P(\bar W \leq z) - \P(Z \leq z)
	& = \E \{ f'(\bar W) - \bar W f(\bar W) \}.
\end{align*}
Let  $\bar X_i$ and $\bar Y_i$ be defined as in \cref{eq-barS} with $\tau = \kappa$, and recall that 
\begin{equ}
	\xi_i = \frac{\bar X_i}{\bar V}, \quad \eta_i = \frac{\bar Y_i}{ \bar V }	, \quad \bar W^{(i)} =  \bar W - \eta_i.   
    \label{eq-xieta}
\end{equ}
Then, it follows that 
\begin{align*}
	\E \{ \bar W f(\bar W) \} 
	& = \sum_{i \in \mathcal{J}} \E \{ \xi_i (f(\bar W) - f(\bar W^{(i)}) ) \} + \sum_{i \in \mathcal{J}} \E \{ \xi_i f(\bar W^{(i)}) \}\\
	& = \E \biggl\{\frac{1}{\bar V^2}\int_{-\infty}^{\infty} f'\Bigl( \frac{\bar W + u}{\bar V} \Bigr) \hat K(u) du\biggr\} + \sum_{i \in \mathcal{J}} \E \{ \xi_i f(\bar W^{(i)}) \},
\end{align*}
where 
\begin{align*}
	\hat K(u) & =  \sum_{i \in \mathcal{J}} \bar X_i \bigl\{ \I( - \bar Y_i \leq u \leq 0 ) - \I(0 < u \leq -\bar Y_i) \bigr\}.
\end{align*}
By \cref{eq-pro1,eq-pro2}, we then obtain 
\begin{equ}
	\MoveEqLeft\P (\bar W \leq z) - \P(Z \leq z)\\
	& = \E \biggl\{f'(\bar W)\biggl( 1 - \frac{\sum_{i \in \mathcal{J}} \bar X_i \bar Y_i}{\bar V^2}  \biggr)\biggr\} \\
	& \quad - \E \biggl\{ \frac{1}{\bar V^2}\int_{-\infty}^{\infty} \biggl(f'\Bigl( \frac{\bar S + t}{ \bar V } \Bigr) - f'\Bigl( \frac{\bar S}{\bar V} \Bigr)\biggr) \hat M(t) dt \biggr\} - \sum_{i \in \mathcal{J}} \E \{ \xi_i f(\bar W^{(i)}) \}
	\\
	& \leq R_1 + R_2 + R_3 + R_4, 
    \label{eq-RR0}
\end{equ}
where 
\begin{align*}
	R_1 & =  \E \biggl\lvert 1 - \frac{1}{\bar V^2} \sum_{i \in \mathcal{J}} \bar X_i \bar Y_i \biggr\rvert, \\
	R_2 & = \frac{8}{\sigma^3}\E \biggl\{  ( \lvert \bar W \rvert + 1 ) \int_{-\infty}^{\infty} |t \hat M(t)| dt \biggr\},\\
	R_3 & = \frac{4}{\sigma^2} \sum_{i \in \mathcal{J}}\E \Bigl\{ |\bar X_i \bar Y_i|  \I \bigl( z - |\bar Y_i|/\bar V \leq \bar W \leq z + |\bar Y_i|/\bar V \bigr) \Bigr\},\\
	R_4 & = \biggl\lvert \sum_{i \in \mathcal{J}} \E \{ \xi_i f(\bar W^{(i)}) \}  \biggr\rvert. 
\end{align*}

For $R_1$, by \cref{eq-l13,eq-l14,eq-l16}, we have 
\begin{equ}
	R_1 & \leq 
	\E \biggl\{ \biggl\lvert 1 - \frac{1}{2\sigma^2} \sum_{i \in \mathcal{J}} \bar X_i \bar Y_i \biggr\rvert \I\biggl( \sum_{i \in \mathcal{J}} X_i Y_i > 2\sigma^2 \biggr) \biggr\}
	\\
		& \quad + 
	\E \biggl\{ \biggl\lvert 1 - \frac{4}{\sigma^2} \sum_{i \in \mathcal{J}} \bar X_i \bar Y_i \biggr\rvert \I\biggl( \sum_{i \in \mathcal{J}} X_i Y_i	 < \sigma^2/4 \biggr) \biggr\}\\
		& \leq \biggl( 1 + 4 \bar \sigma^2/\sigma^2 \biggr) \P \biggl( \biggl\lvert \sum_{i \in \mathcal{J}} \bar X_i \bar Y_i - \bar\sigma^2 \biggr\rvert > \sigma^2/2 \biggr)\\
		& \quad + \frac{4}{\sigma^2} \E \biggl\lvert \sum_{i \in \mathcal{J}} \bar X_i \bar Y_i - \bar \sigma^2 \biggr\rvert \I \biggl( \biggl\lvert \sum_{i \in \mathcal{J}} \bar X_i \bar Y_i - \bar\sigma^2 \biggr\rvert > \sigma^2/2 \biggr) \\
		& \leq 100 \kappa^2 \beta_3. 
    \label{eq-RR1}
\end{equ}
For $R_2$, we have 
\begin{align*}
	R_2 & \leq \frac{4}{\sigma^3} \sum_{i \in \mathcal{J}}\E \bigl\{ ( \lvert \bar W \rvert + 1 ) |\bar X_i \bar Y_i^2	  | \bigr\}\leq \frac{4}{\sigma^3} \sum_{i \in \mathcal{J}}\E \biggl\{ \Bigl( \frac{2}{\sigma} \lvert \bar S \rvert + 1 \Bigr) |\bar X_i \bar Y_i^2	  | \biggr\}.
\end{align*}
Let 
\begin{align*}
	\bar S^{(B_i)} = \sum_{j \not\in B_i}\bar X_j.
\end{align*}
Then, $\bar S^{(B_i)}$ is independent of $(\bar X_i, \bar Y_i)$, and $\lvert \bar S - \bar S^{(B_i)} \rvert \leq \sigma$. Moreover, by \cref{eq-l16},
\begin{align*}
	\E \lvert \bar S^{(B_i)} \rvert \leq \E \lvert \bar S \rvert + \sigma
	 = \bar \sigma + \sigma \leq 2.02 \sigma. 
\end{align*}
Thus, we have
\begin{align*}
	\E \lvert \bar S \bar X_i \bar Y_i^2 \rvert
	& \leq \E \lvert \bar S^{(B_i)} \rvert \E \lvert \bar X_i \bar Y_i^2 \rvert + \sigma\E \lvert \bar X_i \bar Y_i^3 \rvert \leq 3.02 \sigma \E \lvert \bar X_i \bar Y_i^2 \rvert. 
\end{align*}
Moreover, by H\"older's inequality, for any $c > 0$, 
\begin{align*}
	\sum_{i \in \mathcal{J}} \E \lvert \bar X_i \bar Y_i^2 \rvert 
	& \leq \sum_{i \in \mathcal{J}} \E \biggl( \frac{c^3 \lvert \bar X_i \rvert^3}{3} + \frac{2\lvert \bar Y_i^3 \rvert}{3c^{3/2}} \biggr) \\
	& \leq \sum_{i \in \mathcal{J}} \E \biggl( \frac{c^3 \lvert \bar X_i \rvert^3}{3} + \kappa^2\sum_{j \in A_i} \frac{2\lvert \bar X_j^3 \rvert}{3c^{3/2}} \biggr) \\
	& \leq \frac{c^3 \sigma \beta_3}{3} + \frac{2 \kappa^3 \sigma\beta_3}{3 c^{3/2}}. 
\end{align*}
Choosing $c = \kappa^{2/3}$, we have 
\begin{align*}
	\sum_{i \in \mathcal{J}} \E \lvert \bar X_i \bar Y_i^2 \rvert 
	& \leq \kappa^{2} \sigma^3 \beta_3. 
\end{align*}
Therefore,
\begin{equ}
	R_2 
	& \leq 16 (\kappa + 1) \sum_{i \in \mathcal{J}} \E \lvert \bar X_i \bar Y_i^2/\sigma^3 \rvert\leq 16 \kappa^{2} \beta_3. 
    \label{eq-RR2}
\end{equ}

For $R_3$, by \cref{lem-con2} and noting that $\lvert \bar X_i \rvert \leq \sigma / \kappa$, we have 
\begin{equ}
	R_3 
	& \leq \Const\biggl(  \sum_{i \in \mathcal{J}} \E \lvert \bar X_i \bar Y_i^2 / \sigma^3 \rvert + \kappa^2 \beta_3 \sum_{i \in \mathcal{J}} \E \lvert \bar X_i \bar Y_i / \sigma^2 \rvert \\
	& \quad + \kappa^2 \sum_{i \in \mathcal{J}} \sum_{j \in N(B_i)} \E \lvert \bar X_i \bar Y_i \bar X_j / \sigma^3\rvert + \kappa^2 \sum_{i \in \mathcal{J}} \sum_{j \in N(B_i)} \E \lvert \bar S \bar X_i \bar Y_i \bar X_j / \sigma^4 \rvert \biggr)\\
	& \leq \Const \kappa^2\bigl(   \beta_3 +  \kappa^2 \theta	\beta_3 +  \beta_3 +   \beta_3\bigr) \leq \Const(\theta + 1)\kappa^2 \beta_3. 
    \label{eq-RR3}
\end{equ}
% where we use similar arguments to  that leading to the upper bound of $R_2$.

For $R_4$, by \cref{lem2.2}, we have 
\begin{equ}
	R_4 \leq \Const( \kappa^2 \beta_3 +  \kappa\beta_2 ). 
    \label{eq-RR4}
\end{equ}

By \cref{eq-RR0,eq-RR1,eq-RR2,eq-RR3,eq-RR4}, we have 
\begin{align*}
	\P(\bar W \leq z) - \P(Z \leq z)
	& \leq \Const(\theta + 1) \kappa^2 \beta_3 + \Const \kappa \beta_2. 
\end{align*}
The lower bound can be obtained similarly. This completes the proof. 	
\end{proof}

Now we are ready to give the proof of \cref{thm1}. 
\begin{proof}
[Proof of \cref{thm1}]
	Assume without loss of generality that 
	\begin{align*}
		\beta_2 \leq \frac{1}{150\kappa}, \quad \beta_3 \leq \frac{1}{150 \kappa^2}. 
	\end{align*}
	Recall the function $\psi$ in \cref{eq-psi} and $\bar S$ in \cref{eq-barS}, and define 
	\begin{equ}
		\tilde V = \psi\Bigl( \sum_{i \in \mathcal{J}} (\bar X_i \bar Y_i - \bar X \bar Y) \Bigr), \quad \tilde W = \bar S / \tilde V.  
        \label{eq-tilV}
	\end{equ}
	Then, we have 
	\begin{align*}
		\sup_{z \in \R} \bigl\lvert \P(\tilde W \leq z) - \P( W \leq z ) \bigr\rvert
		& \leq \P(\max_{i \in \mathcal{J}} \lvert X_i \rvert > \sigma / \kappa) + \P( \sum_{i \in \mathcal{J}} \bar X_i \bar Y_i \leq \sigma^2/4  ).
	\end{align*}
	Now, 
	\begin{align*}
		\P ( \max_{i \in \mathcal{J}} |X_i| > \sigma/\kappa ) 
		& \leq \sum_{i \in \mathcal{J}} \P(|X_i| > \sigma/\kappa) 
		= \beta_0.
	\end{align*}
	Also, by \cref{eq-l14}, we have 
	\begin{align*}
		\P\biggl\{ \sum_{i \in \mathcal{J}} \bar X_i \bar Y_i \leq \sigma^2/4  \biggr\}
		& \leq \P \bigg\{ \biggl\lvert \sum_{i \in \mathcal{J}} (\bar X_i \bar Y_i - \E \{ \bar X_i \bar Y_i \}) \biggr\rvert \geq \frac{\sigma^2}{2} \biggr\} 
		\leq 4 \kappa^2 \beta_3. 
	\end{align*}
	
	Then, 
	\begin{equ}
		\sup_{z \in \R} \bigl\lvert \P(\tilde W \leq z ) - \P(W \leq z) \bigr\rvert
		 & \leq 4\kappa^2 \beta_3+ \beta_0. 
	    \label{eq-trun}
	\end{equ}

	For $\epsilon > 0$, let 
	\begin{align*}
		h_{z, \epsilon} (w) = 
		\begin{cases}
			1 & \text{if $w \leq z$}, \\
			0 & \text{if $w > z + \epsilon$}, \\
			\text{linear} & \text{otherwise}.
		\end{cases}
	\end{align*}
	Then, it follows that $h_{z,\epsilon}'(w) = \I(z \leq w \leq z + \epsilon)/\epsilon$. 
	Recall that $\bar W$ and $\bar V$ as in $\cref{eq-barS}$, 
	and observe that 
	\begin{equ}
		\P (\tilde W \leq z) - \P(\bar W \leq z)
		& \leq \E \{ h_{z, \epsilon}(\tilde W) - h_{z, \epsilon}(\bar W) \} + \P (z \leq \bar W \leq z + \epsilon) \\
		& \leq \E \{ h_{z, \epsilon}(\tilde W) - h_{z, \epsilon}(\bar W) \} + \bigl( \Phi(z + \epsilon) - \Phi(z)\bigr) \\
		& \quad + 2 \sup_{z \in \R} \lvert \P(\bar W \leq z) - \Phi(z) \rvert. 
	    \label{eq-trun2}
	\end{equ}
	Note that 
	\begin{align*}
		\biggl\lvert \frac{1}{\bar V} - \frac{1}{\tilde V} \biggr\rvert
		& \leq \frac{ |\mathcal{J}| |\bar X \bar Y| }{\bar V \tilde V (\bar V + \tilde V)}, 
	\end{align*}
	and thus
	\begin{equ}
		\lvert \E \{ h_{z, \epsilon}(\bar W) - h_{z, \epsilon}(\tilde W) \} \rvert
		& \leq \|h_{z,\epsilon}'\| \E | \bar W - \tilde W|  \\
		& \leq \frac{ 1 }{\epsilon} \E \biggl\{ \frac{ |\bar S^2 \bar Y| }{ \bar V \tilde V (\bar V + \tilde V) } \biggr\}\\
		& \leq \frac{4}{\epsilon \sigma^3} \E \lvert \bar S^2 \bar Y \rvert\\
		& = \frac{4}{\epsilon \sigma^3 \lvert \mathcal{J} \rvert^{-1}} \E \biggl\lvert \biggl(\sum_{i \in \mathcal{J}} \bar X_i\biggr)^2 \biggl(\sum_{j \in \mathcal{J}} \bar Y_j\biggr) \biggr\rvert.
        \label{eq-p2.1-a}
	\end{equ}
	By the H\"older inequality and \cref{lem-4moment}, we have for any $c > 0$, 
	\begin{align*}
		\E \biggl\lvert \biggl(\sum_{i \in \mathcal{J}} \bar X_i\biggr)^2 \biggl(\sum_{j \in \mathcal{J}} \bar Y_j\biggr) \biggr\rvert 
		& \leq \frac{c}{2} \E \lvert \bar S^2 \rvert + \frac{1}{2c} \E \biggl\lvert \biggl(\sum_{i \in \mathcal{J}} \bar X_i\biggr)^2 \biggl( \sum_{j \in \mathcal{J}} \bar Y_j \biggr)^2\biggr\rvert \\
		& \leq \frac{c}{2} \bar \sigma^2 + \frac{1}{2c} \{1161 \kappa^2(\theta + 1) \sigma^4\}. 
	\end{align*}
	Choosing $c = 35 \kappa (\theta + 1) \sigma$, and noting that $\bar\sigma^2 \leq 1.02 \sigma^2$ by \cref{lem-1}, we have the expectation term of the right hand side of \cref{eq-p2.1-a} is bounded by 
	\begin{equ}
		\E \biggl\lvert \biggl(\sum_{i \in \mathcal{J}} \bar X_i\biggr)^2 \biggl(\sum_{j \in \mathcal{J}} \bar Y_j\biggr) \biggr\rvert
		& \leq 35 \kappa (\theta + 1) \sigma^3. 
        \label{eq-p2.1-b}
	\end{equ}
	Substituting \cref{eq-p2.1-b} to \cref{eq-p2.1-a} yields 
	\begin{align*}
		\lvert \E \{ h_{z, \epsilon}(\bar W) - h_{z, \epsilon}(\tilde W) \} \rvert
		& \leq 140 \kappa(\theta + 1) / (\epsilon |\mathcal{J}|) .  
	\end{align*}
	Moreover, for the second term of the right hand side of \cref{eq-trun2},
	\begin{align*}
		\Phi(z + \epsilon) - \Phi(z) \leq 0.4 \epsilon. 
	\end{align*}
	Choosing $\epsilon = 19 ( \kappa^{1/2}(\theta + 1) )/ \lvert \mathcal{J} \rvert^{1/2}$, and by \cref{eq-trun2}, we have 
	\begin{equ}
		\MoveEqLeft
		\P(\tilde W \leq z) - \P(\bar W \leq z)\\
		& \leq 16 \kappa^{1/2} (\theta + 1) \lvert \mathcal{J} \rvert^{-1/2} + 2  \sup_{z \in \R} \lvert \P(\bar W \leq z) - \Phi(z) \rvert.
	    \label{eq-trun3}
	\end{equ}
	The same lower bound also holds by the same argument. 
	Then, we have 
	\begin{equ}
		\MoveEqLeft
		\sup_{z \in \R} |\P(\tilde W \leq z) - \P(\bar W \leq z)| \\
		& \leq 
		16  \kappa^{1/2} (\theta + 1) \lvert \mathcal{J} \rvert^{-1/2} + 2 \sup_{z \in \R} \lvert \P(\bar W \leq z) - \Phi(z) \rvert.
        \label{eq-trun4}
	\end{equ}
	By \cref{eq-trun,eq-trun4}, and applying \cref{pro-BEWbar}, we completes the proof. 
\end{proof}
\section*{Acknowledgements}
The author would like to thank Qi-Man Shao for his helpful discussions. This project was supported by the Singapore Ministry of Education Academic Research Fund Tier 2 grant MOE2018-T2-2-076.


\begin{thebibliography}{18}
\providecommand{\natexlab}[1]{#1}

\bibitem[{Baldi and Rinott(1989)}]{Bal89}
Baldi, P. and Rinott, Y. (1989). \emph{On {{Normal Approximations}} of
  {{Distributions}} in {{Terms}} of {{Dependency Graphs}}}. \emph{Ann.
  Probab.}, \textbf{17}(4), pp. 1646--1650.

\bibitem[{Barbour et~al.(1989)Barbour, Karo\'nski and Ruci\'nski}]{Bar89}
Barbour, A., Karo\'nski, M. and Ruci\'nski, A. (1989). \emph{A Central Limit
  Theorem for Decomposable Random Variables with Applications to Random
  Graphs}. \emph{J. Comb. Theory Ser. B}, \textbf{47}(2), pp. 125--145.

\bibitem[{Bentkus et~al.(1996)Bentkus, Bloznelis and G\"otze}]{Ben96a}
Bentkus, V., Bloznelis, M. and G\"otze, F. (1996). \emph{A
  {{Berry}}-{{Ess\'een}} Bound for Student's Statistic in the
  Non-{{I}}.{{I}}.{{D}}. Case}. \emph{J. Theor. Probab.}, \textbf{9}(3), pp.
  765--796.

\bibitem[{Bentkus and G\"otze(1996)}]{Ben96}
Bentkus, V. and G\"otze, F. (1996). \emph{The {{Berry}}-{{Esseen}} Bound for
  Student's Statistic}. \emph{Ann. Probab.}, \textbf{24}(1), pp. 491--503.

\bibitem[{Bercu and Touati(2008)}]{Ber08a}
Bercu, B. and Touati, A. (2008). \emph{Exponential Inequalities for
  Self-Normalized Martingales with Applications}. \emph{Ann. Appl. Probab.},
  \textbf{18}(5), pp. 1848--1869.

\bibitem[{Chen and Shao(2001)}]{chen_non-uniform_2001}
Chen, L.~H. and Shao, Q.-M. (2001). \emph{A non-uniform {Berry}--{Esseen} bound
  via {Stein}'s method}. \emph{Probab. Theory Related Fields},
  \textbf{120}(2), pp. 236--254.

\bibitem[{Chen(1998)}]{chen_steins_1998}
Chen, L. H.~Y. (1998). \emph{Stein's {Method}: {Some} {Perspectives} with
  {Applications}}. In L.~Accardi and C.~C. Heyde (Eds.), \emph{Probability
  {Towards} 2000}, Lecture {Notes} in {Statistics}, pp. 97--122, New York, NY:
  Springer.

\bibitem[{Chen and Shao(2004)}]{Che04b}
Chen, L. H.~Y. and Shao, Q.-M. (2004). \emph{Normal Approximation under Local
  Dependence}. \emph{Ann. Probab.}, \textbf{32}, pp. 1985--2028.

\bibitem[{Chen and Shao(2007)}]{chen_normal_2007}
Chen, L. H.~Y. and Shao, Q.-M. (2007). \emph{Normal approximation for nonlinear
  statistics using a concentration inequality approach}. \emph{Bernoulli},
  \textbf{13}(2), pp. 581--599.

\bibitem[{Chen et~al.(2016)Chen, Shao, Wu and Xu}]{Che16}
Chen, X., Shao, Q.-M., Wu, W.~B. and Xu, L. (2016). \emph{Self-Normalized
  {{Cram\'er}}-Type Moderate Deviations under Dependence}. \emph{Ann. Statist.},
  \textbf{44}(4), pp. 1593--1617.

\bibitem[{{de la Pe{\~n}a} et~al.(2004){de la Pe{\~n}a}, Klass and Lai}]{de04}
{de la Pe{\~n}a}, V.~H., Klass, M.~J. and Lai, T.~L. (2004).
  \emph{Self-{{Normalized Processes}}: {{Exponential Inequalities}}, {{Moment
  Bounds}} and {{Iterated Logarithm Laws}}}. \emph{Ann. Probab.},
  \textbf{32}(3), pp. 1902--1933.

\bibitem[{Fang(2019)}]{Fan19b}
Fang, X. (2019). \emph{Wasserstein-2 Bounds in Normal Approximation under Local
  Dependence}. \emph{Electron. J. Probab.}, \textbf{24}, pp. 1--14.

\bibitem[{Jing and Wang(1999)}]{Jin99}
Jing, B.-Y. and Wang, Q. (1999). \emph{An {{Exponential Nonuniform
  Berry}}-{{Esseen Bound}} for {{Self}}-{{Normalized Sums}}}. \emph{Ann.
  Probab.}, \textbf{27}(4), pp. 2068--2088.

\bibitem[{Rinott and Rotar(1996)}]{Ri96M}
Rinott, Y. and Rotar, V. (1996). \emph{A {{Multivariate CLT}} for {{Local
  Dependence}} with $N^{{-1/2}} n$ {{Rate}} and {{Applications}} to
  {{Multivariate Graph Related Statistics}}}. \emph{J. Multivar. Anal.},
  \textbf{56}(2), pp. 333--350.

\bibitem[{Shao(2005)}]{shao_explicit_2005}
Shao, Q.-M. (2005). \emph{An explicit {Berry}?{Esseen} bound for {Student}'s
  t-statistic via {Stein}'s method}. In \emph{Stein's {Method} and
  {Applications}}, vol. Volume 5 of \emph{Lecture {Notes} {Series}, {Institute}
  for {Mathematical} {Sciences}, {National} {University} of {Singapore}}, pp.
  143--155, CO-PUBLISHED WITH SINGAPORE UNIVERSITY PRESS.

\bibitem[{Shao and Wang(2013)}]{Sha13a}
Shao, Q.-M. and Wang, Q. (2013). \emph{Self-Normalized Limit Theorems: {{A}}
  Survey}. \emph{Probab. Surveys}, \textbf{10}(none), pp. 69--93.

\bibitem[{Shao and Zhang(2021)}]{shao_berry--esseen_2021}
Shao, Q.-M. and Zhang, Z.-S. (2021). \emph{Berry--{Esseen} {Bounds} for
  {Multivariate} {Nonlinear} {Statistics} with {Applications} to {M}-estimators
  and {Stochastic} {Gradient} {Descent} {Algorithms}}. \emph{Available at arXiv:2102.04923
  }, arXiv: 2102.04923.

\bibitem[{Shao and Zhou(2016)}]{shao_cramer_2016}
Shao, Q.-M. and Zhou, W.-X. (2016). \emph{Cram\'er type moderate deviation
  theorems for self-normalized processes}. \emph{Bernoulli}, \textbf{22}(4),
  pp. 2029--2079, publisher: Bernoulli Society for Mathematical Statistics and
  Probability.

\end{thebibliography}
\end{document}